\documentclass{article}
\usepackage{amsmath}
\usepackage{amssymb}
\usepackage{color}
\usepackage{graphicx}
\usepackage{xr}
\usepackage{fullpage}
\usepackage{here}

\usepackage{epsfig,psfrag}
\usepackage[space]{grffile}
\usepackage{mathtools}
\usepackage{enumerate}
\usepackage{etoolbox}
\usepackage{bbm}
\usepackage{mhchem}
\usepackage{mathrsfs}
\usepackage{dsfont,accents}

\DeclareMathOperator*{\diag}{diag}

\DeclareMathOperator{\eps}{\varepsilon}

\def\llongrightarrow{\relbar\joinrel\relbar\joinrel\relbar\joinrel\rightarrow}

\providecommand{\rarrow}[1]{\stackrel{#1}{\llongrightarrow}}

\providecommand\X[1]{\boldsymbol{X_{#1}}}
\providecommand\R[1]{\boldsymbol{R_{#1}}}

\providecommand\phib{\boldsymbol{\emptyset}}

\providecommand\V[1]{\boldsymbol{V_{#1}}}
\providecommand\Y[1]{\boldsymbol{Y_{#1}}}
\let\P\relax
\providecommand\P[1]{\boldsymbol{P_{#1}}}
\providecommand\M[1]{\boldsymbol{M_{#1}}}
\providecommand\Q[1]{\boldsymbol{Q_{#1}}}

\newenvironment{proof}{{\it Proof :~}}{\hfill$\diamondsuit$\\}


\newtheorem{theorem}{Theorem}

\newtheorem{proposition}[theorem]{Proposition}

\newtheorem{example}[theorem]{Example}
\newtheorem{hyp}[theorem]{Assumption}

\newtheorem{remark}[theorem]{Remark}

\newtheorem{problem}[theorem]{Problem}


\providecommand{\blue}[1]{\textcolor[rgb]{0,0,0}{#1}}


\def\llongrightarrow{\relbar\joinrel\relbar\joinrel\relbar\joinrel\rightarrow}

\providecommand{\rarrow}[1]{\stackrel{#1}{\llongrightarrow}}

\def\Xz{\boldsymbol{X}}
\def\Vz{\boldsymbol{V}}
\def\Rz{\boldsymbol{R}}
\def\Yz{\boldsymbol{Y}}
\def\Uz{\boldsymbol{U}}
\def\Sz{\boldsymbol{S}}
\def\Cz{\boldsymbol{C}}

\providecommand\X[1]{\boldsymbol{X_{\hspace{-1pt}#1}}}
\providecommand\R[1]{\boldsymbol{R_{\hspace{-0pt}#1}}}

\providecommand\phib{\boldsymbol{\emptyset}}
\author{Corentin Briat\thanks{email:\textit{corentin.briat@bsse.ethz.ch}}, Christoph Zechner, Mustafa Khammash\thanks{email:\textit{mustafa.khammash@bsse.ethz.ch}}\\Department of Biosystems Science and Engineering, ETH Z\"{u}rich, Basel}

\title{Design of a synthetic integral feedback circuit: dynamic analysis and DNA implementation}
\date{}


\begin{document}

\maketitle

\begin{abstract} 
The design and implementation of regulation motifs ensuring robust perfect adaptation are challenging problems in synthetic biology. Indeed, the design of high-yield robust metabolic pathways producing, for instance, drug precursors and biofuels, could be easily imagined to rely on such a control strategy in order to optimize production levels and reduce production costs, despite the presence of environmental disturbance and model uncertainty.  We propose here a motif that ensures tracking and robust perfect adaptation for the controlled reaction network through integral feedback. Its metabolic load on the host is fully tunable and can be made arbitrarily close to the constitutive limit, the universal minimal metabolic load of all possible controllers. A DNA implementation of the controller network is finally provided. Computer simulations using realistic parameters demonstrate the good agreement between the DNA implementation and the ideal controller dynamics.

\noindent\textit{Keywords.} Integral control, perfect adaptation, cybergenetics, DNA reactions.
\end{abstract}


\section*{Introduction}

Adaptation to external disturbances is an essential requirement of many living cells \cite{Banci:13,Berg:15}. At the cellular level, adaptation is often achieved through regulatory strategies of various degrees of complexity. One common strategy is based on the well-known incoherent feedforward loop \cite{Alon:07,Ma:09}. Yet another is based on a negative feedback loop with a buffering node \cite{Ma:09}. Notably, the latter strategy was shown to employ integral feedback, a fundamental strategy extensively considered in control engineering \cite{Astrom:95,Albertos:10}. Integral feedback is particularly prevalent in control systems where a variable, the \emph{controlled variable}, needs to be regulated around a desired constant set-point, despite the presence of external disturbances. Integral feedback was shown to be the basis of a particularly strong type of adaptation, namely \emph{perfect adaptation}, which is the ability of a living system to internally compensate for the onset of a persistent environmental disturbance allowing it to return back to pre-disturbance levels after a transient \cite{Berg:15}. Perfect adaptation was observed in living organisms in a wide variety of contexts. Notable examples include blood calcium regulation in mammals \cite{ElSamad:02}, \emph{E. coli} chemotaxis \cite{Yi:00}, neuronal control of the prefrontal cortex  \cite{Miller:06} and the regulation of tryptophan in \emph{E. coli} \cite{Venkatesh:04} (see also the recent survey \cite{Somvanshi:15}). A novel integral control motif, referred to as \emph{antithetic integral controller}, that is particularly well-suited to noisy environments was recently proposed in \cite{Briat:15e}, where it was posited that this regulation motif is analogous to regulation mechanisms based on $\sigma$ and anti-$\sigma$ factors in bacteria \cite{Storz:11}.

In addition to the problem of elucidating the fundamental role of integral feedback in endogenous strategies for perfect adaptation, proposing novel synthetic motifs implementing an integral action may have an impact on the field of synthetic biology similar to that it had on the field of control engineering. An interesting potential application of integral feedback is the optimization of engineered metabolic networks; see e.g. \cite{He:13,Oyarzun:13,Berkhout:13}. Indeed, such a control strategy would allow for the efficient and guaranteed production of a metabolite of interest, such as a biofuel \cite{Savage:08,Fortman:08} or a drug precursor \cite{Ro:06,Paddon:14}, even in the presence of environmental disturbances and a poorly characterized metabolic pathway. However, theoretical design methods for integral feedback strategies have been seldom reported in the context of synthetic biology, where more elementary feedback control loops are often considered. For instance, the control of a microbial production process has been theoretically analyzed in \cite{Dunlop:10} where various non-integral control strategies were analyzed and compared.  Notably, it was shown that in spite of the fact that some of these strategies could improve the yield of the process through an appropriate tuning of the parameters, the controlled process did not exhibit any perfect adaptation properties. This implied a lack of robustness, a feature needed to compensate for the inherent variability of controlled biological systems. Approximate integral controllers obtained using Hill-kinetics have also been discussed in \cite{Ang:10,Ang:13}. However, the motifs considered in these references require fine parameter tuning in order to obtain a good performance for the controlled network, which is a tricky and time consuming procedure often based on trial-and-error. Another difficulty also lies in the large number of parameters involved, making the dependency of the properties of the controller an intricate function of these parameters, thereby adding a layer of complexity to the already fastidious parameter tuning problem. This contrasts with the antithetic integral controller of \cite{Briat:15e} for which the integral action is a \emph{structural property} of the motif,  meaning that this feature is independent of the values of the parameters, circumventing then the parameter tuning problem discussed above. On the experimental level, implementing synthetic integral feedback circuits remains largely an open problem,  in contrast to proportional-like control strategies, which have been implemented in several systems \cite{Stapleton:12,Auslander:14}.

The present paper is concerned with the analysis of a simple two-parameter integral feedback strategy for the control of biochemical networks. The considered integral feedback controller differs from the usual control engineering one (see Fig.~\ref{fig:topologies}) as we require it to be implementable in terms of chemical reactions and to yield a control signal, the \emph{control input} $u$, that is nonnegative. The latter requirement is, in general, not needed as control inputs are often allowed to take negative values. The integral action of this motif is shown to be \emph{structural}, as for the antithetic integral controller described in \cite{Briat:15e}. It is emphasized that the proposed integral controller can be used on a wide class of well-behaved networks, provided that one of the parameter of the controller, the stability coefficient $\alpha$ (see Fig.~\ref{fig:topologies}\textbf{B}), verifies a certain mild condition. When this condition is met, the trajectories of the controlled network converge towards the unique positive equilibrium point and the controlled network exhibits the desired robust perfect adaptation property for the controlled molecular species. A formula for the metabolic load onto the cell after implementation of the controller reactions is then obtained. We notably establish the existence of a lower bound for this metabolic load, referred here to as the \emph{constitutive limit}, which is shown to be equal to the metabolic cost of the actuation reaction. Note that this reaction is necessarily present, as it is the only reaction that would be involved if a naive constitutive control strategy were considered. Note that this control strategy is the simplest that can be used to control a network. In this regard, the difference between the actual metabolic cost and its lower bound can be interpreted as the \emph{cost of adaptation} for which it is proved that one can make it theoretically as close as desired from the constitutive limit through an appropriate tuning of the second controller parameter, the gain $k$ (see Fig.~\ref{fig:topologies}\textbf{B}), without deteriorating the tracking and perfect adaptation properties of the closed-loop network. A DNA implementation \cite{Soloveichik:10,Chen:13,Chen:15} of the controller reactions is then proposed as an anticipation over future in-vitro experiments. It seems important to stress that even though DNA implementations have been theoretically obtained in \cite{Yordanov:14} for linear modules, including proportional integral controllers, the considered (nonlinear) integral controller has never been designed using nucleic acids so far. Moreover, the spirit behind the current design is dramatically different from the one in \cite{Yordanov:14} as the goal in this latter paper was to report the design of DNA reactions that implement elementary linear systems using the approach proposed in \cite{Oishi:10b}. The current design, however, aims at proposing a generic integral controller that can either be used to regulate certain molecular levels in DNA reaction networks, or be implemented inside a cell to perform in-vivo control of biological reaction networks (see also \cite{Briat:15e}). In the former case, such a controller can be used as a regulating element in DNA reactions in order to achieve certain robustness and adaptation properties whereas, in the latter case, the current DNA implementation should be understood as a proof a concept for the proposed control strategy. Simulations of the DNA reactions using realistic parameters demonstrate the perfect adaptation properties of the network.

\begin{figure}[H]
  \centering
  \includegraphics[width=0.95\textwidth]{./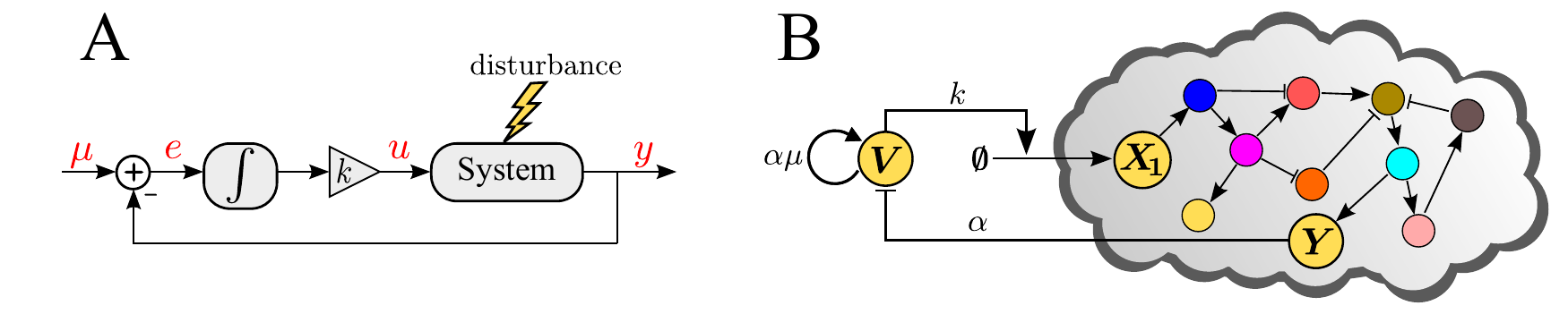}
  \caption{Comparison of the integral control topology of control engineering and the proposed one. \textbf{A.} The standard integral control topology used engineering integrates the error $e=\mu-y$ between the reference (the desired set-point) and the actual output) to yield the control input $u$ after a magnification of the result of the integration by a gain $k$. The control input is then used as an input to the system in order to steer the output $y$ to the desired set-point $\mu$ despite the presence of unmeasured disturbances acting on the system. \textbf{B.} The proposed integral controller indirectly computes the error between the reference $\mu$ and the concentration of the controlled species $\Yz$ through  the interplay between the autocatalytic reaction implementing the reference and the active degradation reaction of $\Vz$ which has a rate depending on the concentration of the controlled species $\Yz$. Finally, the actuated species $\X{1}$ is catalytically produced by the controller species $\Vz$ in order to steer the concentration of the controlled species to the desired steady-state value, despite the presence of stress acting on the controlled network.}\label{fig:topologies}
\end{figure}

\section*{Results and Discussion}

\textbf{The reaction network.} We consider here a reaction network $(\Xz ,\Rz)$ consisting of a family of $d$ distinct molecular species $\X{1},\dots,\X{d}$ which interact through $K$ reaction channels, denoted by $\R{1},\ldots,\R{K}$ (see \cite{Feinberg:72,Horn:72,Goutsias:13}). In the deterministic setting, the state of reaction networks is the vector of concentrations $x\in\mathbb{R}_{\ge0}^d$ of the molecular species $\Xz=(\X{1},\dots,\X{d})$. The evolution of these concentrations is governed by a system of differential equations called the \emph{reaction rate equations}, which can be written as:
\begin{equation}\label{eq:RNmodel1}
\begin{array}{rcl}
    \dot{x}(t) &=& f(x(t)),\ x(0) =x_0\\
    y(t)&=&e_\ell^Tx(t)
\end{array}
\end{equation}
where $e_\ell$ is the $d$-dimensional vector having the $\ell$-th entry equal to 1 ($\ell\in\{1,\ldots,d\}$) and the others to 0. The species $\Yz$ (with concentration $y(t)$ at time $t$) is called the \emph{controlled species} and is chosen, in this case, to coincide with $\X{\ell}$.  The function $f(x)$ can be directly obtained from the set of reactions and must satisfy, for all $i=1,\ldots,d$, the condition that $f_i(x,u)\ge0$ for all $x,u\ge0$ with $x_i=0$. This condition ensures that the state only takes nonnegative values over time and is automatically satisfied when considering reaction networks with mass-action kinetics, Hill-type kinetics, etc. Note that when the reactions are mass-action and are, at most, first-order reactions, then $f(x)$ can be written as $f(x)=Ax+b$ for some suitable matrix $A$ and vector $b$.

\textbf{The control problem and the controller.} \blue{We formulate here the control problem we would like to address and provide a controller that solves it. In order to be precise in our formulation, we use a control theoretic language that is devoted to stating and solving such problems. The different considered concepts, which may be unknown to the readers, will be systematically explained. We also refer the readers to the Box~1 in \cite{Briat:15e} for some additional details on control theory for systems and synthetic biology. The control problem is, therefore, the following: build a reaction network controller (i.e. a set of additional reactions and additional species) that interacts with the reaction network $(\Xz ,\Rz)$ by measuring the molecular concentration of the controlled species $\Yz$ and appropriately acting back on the network $(\Xz ,\Rz)$ in a way that}
\begin{enumerate}
  \item the positive equilibrium point of the closed-loop reaction network (consisting of the interconnection between the network $(\Xz ,\Rz)$ and the controller network) is asymptotically stable (i.e. the trajectories of the closed-loop network locally converge to the positive equilibrium point),
  \item the concentration $y(t)$ of the controlled species $\Yz\equiv\X{\ell}$ tracks a given constant reference $\mu>0$ \blue{(i.e. the concentration of the controlled species $y(t)$ tends to the desired set-point value $\mu$ as the time $t$ goes to infinity)},
  \item the closed-loop network exhibit \emph{perfect adaptation} for the controlled species $\Yz\equiv\X{\ell}$ \blue{(the concentration of the controlled species $y(t)$ always returns to the desired value $\mu$, even in presence of environmental disturbances such as changes in the values of the rate parameters of the network)}, and
  \item the controller has a low metabolic load on the host \blue{(i.e. the total power consumption -- in a way that will be defined later -- of the controller reactions is low)}.
\end{enumerate}

To solve the above control problem, we propose the following controller reaction network:
\begin{equation}\label{eq:controller}
  \underbrace{\Vz \rarrow{\alpha\mu}\Vz  + \Vz }_{\mbox{reference reaction}}\hspace{-0mm},\quad  \underbrace{\Vz  + \X{\ell}\rarrow{\alpha}\X{\ell}}_{\mbox{measurement reaction}}\ \textnormal{and}\ \ \ \underbrace{\Vz \rarrow{k}\Vz  + \X{1}}_{\mbox{actuation reaction}},
\end{equation}
where $\alpha,\mu$ and $k$ are three positive control parameters and where $\Vz$ is the so-called \emph{controller species}. The controller parameters $\mu$, $\alpha$ and $k$ are called the \emph{reference}, the \emph{stability coefficient} and the \emph{gain}, respectively. The first reaction is referred to as the \emph{reference reaction} as it encodes the value of the desired equilibrium value  $\mu$ for the controlled species. The second reaction is the \emph{measurement reaction} as it uses the current concentration of the controlled species $\Yz\equiv\X{\ell}$ to modulate the degradation rate of the controller species $\Vz $ (active degradation), thereby transferring some information on the current concentration level of the controlled species to the concentration level of the controller species. Finally, the last reaction is the \emph{actuation reaction} that catalytically produces the molecular species $\X{1}$, referred to as the \emph{actuated species}, using the current concentration level of the controller species $\Yz$. It is important to notice that this control motif, when connected to the network $(\Xz ,\Rz)$, implements a negative feedback loop, a characteristic that is prevalent in perfect adaptation motifs. \blue{Indeed, the controller species $\Vz$ activates $\X{1}$ which, after a cascade of reactions, will activate in turn the controlled species $\X{\ell}$ that will finally represses the controller species $\Vz$, thereby closing the loop negatively; see Fig.~\ref{fig:topologies}\textbf{B}.} Closely related control mechanisms have also been considered in \cite{Shoval:11} in the context of fold-change detection and in \cite{Buzi:15} in the context of stem cell differentiation even though, in the latter case, the model is considered at a cellular level rather than at a molecular level.

By interconnecting the network $(\Xz ,\Rz)$ and the controller network \eqref{eq:controller}, we obtain a closed-loop reaction network with reaction rate equations:
\begin{equation}\label{eq:closedloop}
\begin{array}{rcl}
\dot{x}(t)&=&f(x(t))+e_1kv(t)\\
\dot{v}(t)&=&\alpha v(t)(\mu-y(t))
\end{array}
\end{equation}
where $e_1$ is the $d$-dimensional vector having the first entry equal to 1 and the others to 0.

\textbf{Assumptions.} \blue{We will now make two assumptions for the network $(\Xz ,\Rz)$ in order to simplify the exposition of the results. Note, however, that these assumptions are principally considered to promote simplicity in the exposition of the results and are not necessary for the proposed controller to be applicable. This latter point will illustrated on dimerization network example (see Fig.~\ref{fig:dimer} and Section \ref{sec:SM:dimerization} of the Supporting Information).
\begin{description}
  \item[Assumption 1.] The reaction network is asymptotically stable and only contains first-order mass-action reactions.
  \item[Assumption 2.] The condition $e_\ell^TA^{-1}e_1\ne0$ holds.
\end{description}
The first assumption implies that $f(x)=Ax$ where $A$ is Hurwitz stable; i.e. all the eigenvalues of $A$ have negative real part whereas the second one is here to make sure  that the concentration level of $\Yz\equiv\X{\ell}$ can be actually changed by actuating $\X{1}$, i.e. there is sequence of reactions that propagate the changes in the concentration levels of $\X{1}$ to the concentration levels of $\X{\ell}$. This is equivalent, in the current case, to the concept of output controllability (see \cite{Briat:15e} for similar assumptions in the stochastic setting).}

\blue{The first assumption seems to be very restrictive since most of the biological networks are nonlinear as they involve higher-order mass-action kinetics or even saturating Hill-kinetics. However, most of the results developed in the linear case can be adapted to the nonlinear case by using the fact a smooth nonlinear system of ordinary differential equations can be locally represented around an equilibrium point by a system of linear differential equations: the so-called \emph{linearized system}. In this regard, linear analysis remains applicable when considering nonlinear systems. This will be demonstrated by the dimerization process example which we will consider later. Note also  that nonlinear versions of the above assumptions exist but are less explicit due to their nonlinear nature. Finally, it is important to mention that the first assumption can be slightly relaxed by allowing for the presence of zeroth-order mass-action reactions, leading then to a function $f$ of the form $f(x)=Ax+b$ for some nonnegative vector $b$. In this case, all the results obtained in the case $b=0$ can be extended to the case $b\ne0$ as shown in Section \ref{sec:SM:perfect_adaptation} of the Supporting Information. In the light of this discussion, the first assumption is hence not as restrictive as it seems, and considering it will allow us to establish and expose the results in a simple and clear way as generic closed-form formulas for the equilibrium points and other important properties characterizing the network properties will exist. This is the reason why, from now on, we will consider that the above assumptions are satisfied.}
%

\textbf{Local asymptotic stability.} Under the assumptions above, the model of closed-loop network \eqref{eq:closedloop} always admit two equilibrium points (see Section \ref{sec:SM:equilibrium_points} of the Supporting Information). The first one is the zero equilibrium point whereas the second one is a positive equilibrium and is given by
\begin{equation}
  (x^*,v^*)=\left(\dfrac{A^{-1}e_1\mu}{e_\ell^TA^{-1}e_1},\dfrac{-\mu}{e_\ell^TA^{-1}e_1k}\right),
\end{equation}
where $x_\ell^*=e_\ell^Tx^*=\mu$. As stated in Proposition \ref{prop:stab_zero_1} of the Supporting Information, the zero equilibrium point is always unstable (i.e. the trajectories of the model \eqref{eq:closedloop} never converge to the zero equilibrium point), meaning that the controller with always be active. The positive equilibrium point, on the other hand, is locally asymptotically stable (i.e. the trajectories of the model \eqref{eq:closedloop} converge towards the positive equilibrium point provided that they start sufficiently closely to it) whenever the stability coefficient $\alpha$ is smaller than a certain threshold that is inversely proportional to the reference $\mu$ (See Proposition \ref{prop:stab_positive_1} of the Supporting Information). Interestingly, the stability of the closed-loop network is independent of the value of the gain $k>0$, which can therefore be used for another purpose. The value of $\alpha$ can also be tuned so that the system has good convergence properties; e.g. a fast convergence rate towards the equilibrium or fast settling time (see Fig.~\ref{fig:gene:2b}). Note, moreover, that since the maximum admissible value for $\alpha$ is inversely proportional to $\mu$, then the range of admissible values for $\alpha$ gets smaller as the reference  increases (see Fig.~\ref{fig:gene:2b}\textbf{A}). This implies that increasing the equilibrium value for the controlled species has destabilizing effect on the closed-loop network. Finally, when the stability condition on $\alpha$ is not met, then the solutions of the system will be oscillatory, as it can be observed in Fig.~\ref{fig:gene:2b}.

\begin{figure}[H]
  \includegraphics[width=0.95\textwidth]{./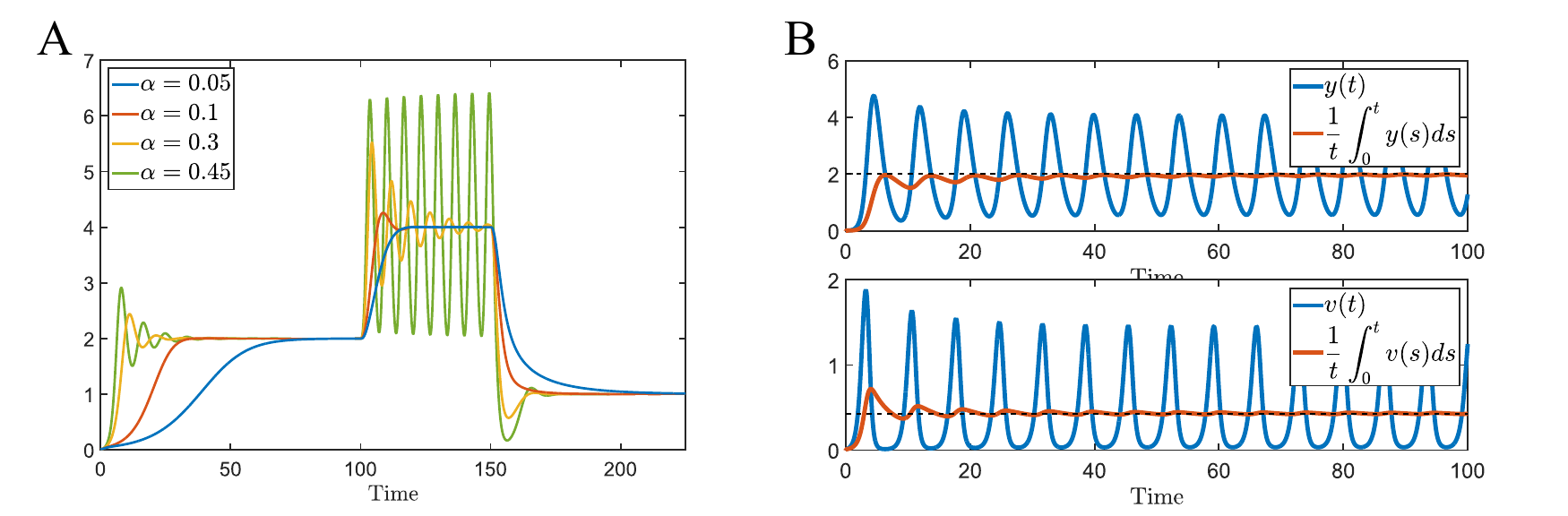}
  \caption{\small\textbf{A.} Evolution of the concentration of the mature protein  for various values for the reference $\mu$ and for the stability coefficient $\alpha$ illustrating that inappropriately chosen values for $\alpha$ may lead to oscillations. \textbf{B.} Even when the the concentration levels of the mature protein and the controller species are oscillatory, set-point regulation will still hold, but at the level of time-average of the concentration levels.}\label{fig:gene:2b}
\end{figure}

\textbf{Tracking and perfect adaptation properties.} The presence of the integral action ensures that the equilibrium value of the concentration of the controlled species is equal to the reference $\mu$, emphasizing then the property of constant reference tracking. By combining this statement with the (local) asymptotic stability of the positive equilibrium point, we can state that trajectories starting close enough from the equilibrium point will necessarily converge to it and that the concentration of the controlled species will, therefore, necessarily converge to the reference $\mu$. (see Fig.~\ref{fig:gene:1}\textbf{B}). The proposed controller, moreover, ensures that the equilibrium value for the concentration of the controlled species remains the same, i.e. equal to $\mu$, even in the presence of external disturbances (see Fig.~\ref{fig:gene:1}\textbf{C}) and of variations in the values of the network parameters (see Fig.~\ref{fig:gene:1}\textbf{D}), thereby demonstrating the perfect adaptation property for the closed-loop network (see Section \ref{sec:SM:perfect_adaptation} of the Supplementary Information). Finally, when the stability coefficient $\alpha$ does not meet the requirements of local asymptotic stability, the trajectories of the closed-loop network \eqref{eq:closedloop} will be oscillatory and, therefore, tracking and perfect adaptation will not hold. However, it is shown in Proposition \ref{prop:time_averages} of the Supplementary information that these properties will nevertheless hold at the level of the time-average of the trajectories. This is illustrated in Fig.~\ref{fig:gene:2b}\textbf{B}.

\begin{figure}[H]
  \centering
  \vspace{-1.5cm}
  \includegraphics[width=0.95\textwidth]{./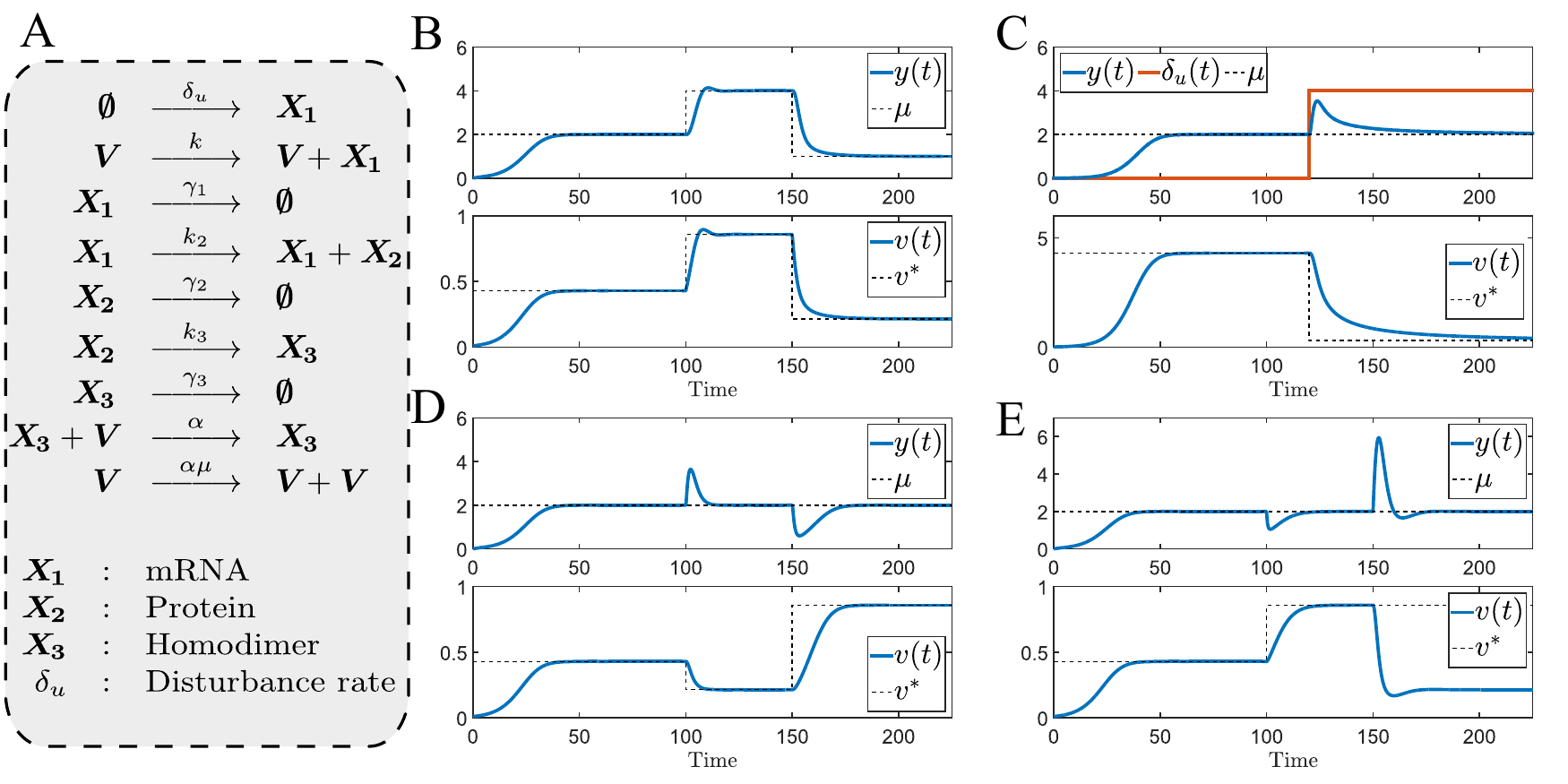}
  \caption{\small\textbf{A.} Reaction network describing the disturbed gene expression network with protein maturation and the various molecular species.  \textbf{B.} The integral controller ensures set-point regulation for the concentration of the mature protein (top) through a suitable and automatic modulation of the concentration of the controller species (bottom). \textbf{C.} The controller is able to detect the presence of the basal rate $\delta_u$  without directly measuring it and adjusts the controller species concentration level (bottom) to steer back the concentration of the mature protein to the desired set-point (top). \textbf{D.} Changes in the ribosomal activity induce variations for the translation rate $k_2$ which are detected by the controller (bottom) and immediately corrected to drive back the concentration of the mature protein to the desired set-point (top). \textbf{E.} The presence of downstream networks or changes in the enzyme activity affect the value of the degradation rate $\gamma_3$, which is immediately taken into account by the controller (top) to correct the trajectory of the mature protein concentration level.}\label{fig:gene:1}
\end{figure}

\textbf{Metabolic load and cost of adaptation.} It is interesting to characterize the metabolic load of the controller reactions on the host. Indeed, reactions that are too energy consuming will be detrimental, or even fatal, to the host. It is hence of general interest to develop biochemical controllers implementing reactions having low metabolic cost. It is proved (see Section \ref{sec:SM:metabolic_cost} of the Supplementary Information) that the power consumption at equilibrium $P^*$ is given by the expression
\begin{equation}
  P^*=\underbrace{\dfrac{\alpha\mu^2(\kappa_r+\kappa_m)}{kg}}_{\mbox{Cost of adaptation}}+\underbrace{\dfrac{\mu\kappa_a}{g}}_{\mbox{Constitutive limit}}
\end{equation}
where $g=-e_\ell^TA^{-1}e_1$ and $\kappa_r,\kappa_m$ and $\kappa_a$ are the elementary metabolic costs associated with the reference reaction, the measurement reaction and the actuation reaction, respectively. Not surprisingly, a large reference $\mu$ will be associated with a high metabolic load. This can, however, be compensated by increasing the controller gain $k$ without destabilizing the closed-loop network since, as mentioned in the section on closed-loop stability, the stability of the closed-loop network is independent of the value of the gain $k$. In the same way, when the system has high-gain $g\gg1$ (i.e. when small variations in the input are largely amplified to the output), the controller will be naturally less penalizing to the cell because the necessary concentration level for the controller species to reach will be low. However, this characteristic is tied to the network and cannot be a priori modified, unless a synthetic network is considered and designed to have a high gain. Finally, it is also important to emphasize the existence of the lower bound $\mu\kappa_a/g$ that may appear to impose a fundamental limitation to our control strategy. This is not the case, however, since this lower-bound is the smallest that we can get for the following reason: it corresponds to the cost of the actuation reaction that is necessarily present regardless of the type of controller network that is considered. This notably includes the constitutive controller that would constitutively produce $\X{1}$ at the constant rate $\mu/g$ (so that we indeed have that $x_\ell^*=\mu$ at equilibrium). This motivates the designation of this lower bound as the \emph{constitutive limit}. In this regard, we can argue that the constitutive controller structure (without feedback) is the least cumbersome, but no adaptation properties are ensured by it. The \emph{cost of adaptation}, which can be defined as the remaining part in the overall metabolic cost, can be theoretically made as close as desired from the constitutive limit $\mu\kappa_a/g$ by appropriately choosing $k$ (i.e. picking it large).

\textbf{Tuning the controller parameters.} The discussion above suggests that the parameters can be tuned in a hierarchical way. First of all, the range of values for the reference $\mu$ should be defined, and only then a suitable stability coefficient can be selected such that the closed-loop network has some good performance (e.g. fast convergence properties). The gain $k$ can be finally chosen to reduce the overall metabolic load of the network or to improve some transient behavior for the closed-loop network.

\textbf{Practical implementation.} As the controller reactions \eqref{eq:controller} may be difficult to implement per se, we propose the following alternative reaction network
\begin{equation}
    \Vz \rarrow{h_\theta(v)}\Vz  + \Vz, \ \Vz  + \X{\ell}\rarrow{\alpha}\X{\ell}\ \textnormal{and}\ \Vz \rarrow{k}\Vz  + \X{1}
\end{equation}
where $h_\theta(v)=\dfrac{\alpha\theta\mu}{\theta+v}$ and $\theta$ is chosen such that $\theta\gg1$. It can be shown that, in the limit $\theta\to\infty$, the above reaction network tends to the proposed one since $h_\theta(v)\to\alpha\mu v$ as $\theta\to\infty$ (while $v$ remains bounded). In the case of the controlled birth-death process, we have that the equilibrium point of the closed-loop network is given by
\begin{equation}
  (x^*_\rho,v_\rho^*)=\left(\dfrac{\mu\rho}{2}\left(-1+\sqrt{1+\dfrac{4}{\rho}}\right),\dfrac{\rho\mu\gamma}{2k}\left(-1+\sqrt{1+\dfrac{4}{\rho}}\right)\right)
\end{equation}
where $\rho=\dfrac{k\theta}{\mu\gamma}$. When $\rho\gg1$, we obtain that
\begin{equation}
  (x^*_\rho,v_\rho^*)\simeq\left(\mu,\dfrac{\mu\gamma}{k}\right),
\end{equation}
where we can recognize in the right-hand side the equilibrium point associated with the use of the integral controller \eqref{eq:controller}.

\textbf{Stochastic considerations.} The proposed controller can be proved to be unadapted to the stochastic setting because of the presence of an absorbing state for the controllser species. Indeed, the reaction $\Vz +\Yz \rarrow{\alpha}\Yz $ may completely deplete the system from the controller species $\Vz $, resulting in a failure of the controller. The biochemical noise is hence quite penalizing to this controller, unlike for the antithetic integral controller \cite{Briat:15e} that works both in the deterministic and the stochastic settings. \blue{However, the proposed controller involves one molecular species and three reactions in place of two molecular species and four reactions for the antithetic integral controller. This controller is therefore less complex and may be easier to implement than the antithetic integral controller. Note also that an integral controller which is less complex than the proposed one does not seem to exist since it involves the minimum number of molecular species and the supposedly minimum amount of reactions: one for actuating the system, one for measuring from the system and one implementing the controller integral dynamics.}

\blue{\textbf{Control of a gene expression network with maturation.} We illustrate here the proposed method on the gene expression network with maturation described in Fig.~\ref{fig:gene:1}\textbf{A}. Technical details can be found in Section \ref{sec:SM:gene_expression} of the Supplementary Information. For this network, the nominal values for the parameters are given by $\gamma_1=1.2337$, $k_{2} = 1.4513$, $\gamma_2=3.0155$, $k_{3} = 2.3679$, $\gamma_3=1.1114$, $\delta_u=0$, $\alpha=0.081$ and $k=10$. The rate $\delta_u$ is the basal transcription rate considered here as an unknown constant disturbance. Through an appropriate tuning of the parameter, we can see in Fig.~\ref{fig:gene:1}\textbf{B} that the concentration of the matured protein tracks the desired set-point $\mu$ over time through a suitable and automatic correction of the concentration of the controller species and as a result of the considered integral feedback strategy. The value of $\mu$ is initially set to 2. At $t=100$, this values increases to 5 and, finally, decreases to 1 at $t=150$.}

\blue{We now analyze the influence of the basal transcription rate $\delta_u$ in the case $k=1$. The initial value for $\delta_u$ is 0 and increases to 4 at $t=120$. We can see in Fig.~\ref{fig:gene:1}\textbf{C} that the controller is able to detect the increase in the concentration of the controlled species and to compensate it by reducing the concentration level of the controller species, therefore ensuring  perfect adaptation for the mature protein concentration level, which is again a consequence of integral feedback. The influence of variations of the translation rate $k_2$ on the mature protein and controller species concentration levels is depicted in Fig.~\ref{fig:gene:1}\textbf{D} where, initially set to 1.4513, the translation rate $k_2$ is increased to 2.9026 at $t=100$ and then decreased to 0.7257 at $t=150$. The variation of this parameter can be, for instance, interpreted as a change in the ribosomal activity within the cell. We can observe, once again, that the concentration level of the mature proteins perfectly adapts to variations of this parameter, a consequence of the ability of the controller to detect these variations and correct them by modulating the concentration levels of the controller species. Finally, the influence of variations of the mature protein degradation rate $\gamma_3$ on the mature protein and controller species concentration levels is depicted in Fig.~\ref{fig:gene:1}\textbf{E} where, initially set to 1.1114, the degradation rate $\gamma_3$ is increased to 2.2228 at $t=100$ and then decreased to 0.5557 at $t=150$. The variability of this rate constant can be a consequence of a loading effect arising from the presence of downstream network with variable activity that takes mature proteins as input and converts them into another molecular species.  We can see that the concentration level of the mature proteins perfectly adapts to the variations of this parameter, a consequence of the ability of the controller to detect these variations and to appropriately correct the levels of the controller species.}

\blue{To analyze the influence of the stability coefficient $\alpha$ on the dynamics of the closed-loop network we dynamically change its value in the simulation. We can see in Fig.~\ref{fig:gene:2b}\textbf{A} that when $\alpha$ increases, the output responds faster but also moves towards instability (oscillations). When $\mu$ increases from 2 to 4 at $t=100$, we can see the that asymptotic stability is lost for the trajectory corresponding to the stability coefficient equal to $\alpha=0.45$, indicating then that this value exceeds the maximal admissible value that preserves the stability of the closed-loop network.  This can be theoretically explained using the facts that, when $\mu=2$, the stability is preserved provided that $\alpha<0.84$ and, when $\mu=4$, this is only the case when $\alpha<0.42$. This illustrates the fact that the upper-bound on the reference $\mu$ should be considered before choosing the value of the stability coefficient, as mentioned in the section on parameter tuning. Finally, when $\mu=1$, the stability of the closed-loop network is ensured if $\alpha<1.68$, which explains why all the trajectories are well-behaved (i.e. not oscillatory) after $t=150$. Finally, we illustrate in Fig.~\ref{fig:gene:2b}\textbf{B} the fact that even when the concentration levels of the mature protein and the controller species exhibit an oscillatory (i.e. not asymptotically stable) behavior, we can observe that the time-averages both converge to the desired equilibrium point, emphasizing that tracking is achieved at the level of the time-averages of the trajectories. Although not represented, the time-average of the concentration of the controlled species will also exhibit perfect adaptation properties.}

\blue{\textbf{Control of a dimerization network.} We demonstrate here that the proposed integral controller also works on nonlinear networks such as the dimerization process depicted in Fig.~\ref{fig:dimer}\textbf{A} where the species $\X{1}$ and $\X{2}$ represent the protein molecules and the corresponding homodimer, respectively. The technical results can be found in Section \ref{sec:SM:dimerization} of the Supplementary Information. The parameters of the network are set to $\gamma_1=1$, $k_{12} = 1$, $\gamma_2=2$, $k_{21} = 2$, $\alpha=0.2$ and $k=10$. The goal here is to act on the production rate of the protein $\X{1}$ in order to regulate the concentration of the homodimer $\X{2}$. We can see in  Fig.~\ref{fig:dimer}\textbf{B} that when the controller parameters are chosen appropriately, the concentration of the homodimer is able to track the desired set-point through an appropriate adaptation of the production of the protein by the controller network. There, the value for $\mu$ initially set to 2, is increased to 5 at $t=50$s and decreased to 1 at $t=100s$.  In a similar way, we can see in Fig.~\ref{fig:dimer}\textbf{C} that the controller is able to drive back the concentration of the homodimer to the desired set-point whenever parameters in the network vary. In this simulation scenario, the value of $\gamma_1$, initially set to 1 is increased to 3 at  $t=50$ and decreased to 1/2 at $t=100$. This demonstrates the ability of the proposed to controller to ensure perfect adaptation for the controlled nonlinear network.}

\begin{figure}[H]
  \centering
  \includegraphics[width=0.95\textwidth]{./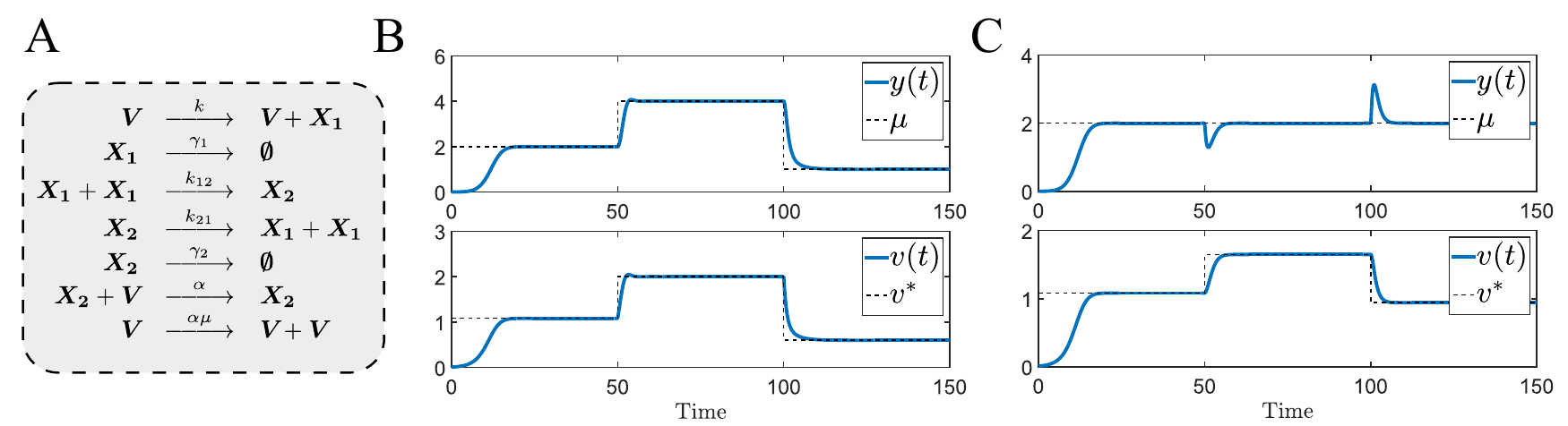}
  \caption{\small\textbf{A.} Reaction network describing a dimerization network controlled with the proposed integral controller. The species $\X{1}$ and $\X{2}$ represent the protein molecules and the corresponding homodimer, respectively. The controller monitors the concentration levels of the homodimer and acts back on the production rate of the protein. \textbf{B.}  Evolution of the concentration of the homodimer $\X{2}$ (top) and the controller species $\Vz$ (bottom) for various values for the reference $\mu$. The value for $\mu$ is increased at $t=50$ and decreased at $t=100$. We can see that the concentration levels of the controlled species tracks the desired set-point through the adjustment of the concentration levels of the controller species as a response of variations in the concentration levels of the controlled  species. \textbf{C.} Evolution of the concentration of the homodimer $\X{2}$ (top) and the controller species $\Vz$ (bottom) for various values for the parameter $\gamma_1$. The variation of this rate constant can be interpreted as the presence of unmodeled network that can, for example, converts the proteins into another molecular species at a varying rate. The value of $\gamma_1$ is increased at $t=50$ and decreased  at $t=100$. We can see that the concentration level of the homodimer molecules perfectly adapts to the variations of this parameter, a consequence of the ability of the controller to detect these variations and to react accordingly by modulating the concentration levels of the controller species.}\label{fig:dimer}
\end{figure}

%
%

\textbf{DNA implementation.} \blue{In the following, we show how the proposed control circuit can be realized biochemically using DNA strand displacement (DSD) cascades \cite{Phillips:09,Soloveichik:10,Zhang:11,Qian:11,Chen:13,Chen:15}. Such cascades allow us to incorporate complex circuit dynamics through a network of hybridization reactions involving single- and multi-stranded DNA molecules (see \cite{Chen:15} for an overview). We consider here the DSD design strategy from \cite{Soloveichik:10} allowing us to synthesize arbitrary mass-action kinetics from DNA. The main idea is to map each formal reaction of the controller to multiple bimolecular reactions in the corresponding DNA circuit. More specifically, one uses a multi-stranded DNA complex (referred to as gate) to first recruit all reactants of a formal reaction. Once all reactants are in place, the gate displaces a messenger strand, which binds to a second, which in turn releases all products of the formal controller reaction.}

\blue{The system and controller species $\Xz$ and $\Vz$ are represented by single-stranded DNA molecules consisting of two short binding domains of around $5nt$ (referred to as toehold domains) and a single specificity domain of around $20nt$. 
The additional gate complexes are supplied in excess, such that their concentration (initially set to $\Omega$) is substantially higher than the concentrations of the network and the controller species (i.e. $x(t)$ and $y(t)$) and will hence remain effectively unchanged over the duration of an experiment. It can then be shown analytically that the higher-dimensional DNA kinetics (see Fig.\ref{fig:BD:DNA}) simplify to the desired rate laws of the formal reactions. For instance, the autocatalytic reference reaction
\begin{equation*}
	\Vz \rarrow{k} \Vz + \Vz
\end{equation*}
is realized biochemically using two bimolecular DNA reactions
\begin{equation*}
\begin{split}
	\Vz + \Cz &\rarrow{\lambda_{1}} \Uz + \text{waste}\\
	\Uz + \Sz &\rarrow{\tilde{\lambda}} \Vz + \Vz + \text{waste},\\
\end{split}
\end{equation*}
with $\lambda_{1}$ and $\tilde{\lambda}$ as physical reaction rate constants and with ``waste'' serving as a proxy symbol to denote non-reactive byproducts that are created during strand displacement (see Figure S.1 in the Appendix for a specific example). Now, if  $c(0)=s(0)=\Omega>>v(t)$, we can assume that $c(t)\approx \Omega$ and $s(t)\approx \Omega$ over the duration of an experiment. 
Using quasi-steady state analysis it can then be shown that the rate of the aggregated reaction rate simplifies to $\lambda_{1} \Omega^{-1}v(t)$, which is consistent with the desired rate law of the autocatalytic reference reaction. A similar procedure can be followed to derive the kinetics of the measurement and actuation reactions (see \cite{Soloveichik:10} for a full kinetic analysis of uni- and bimolecular reactions).}

\begin{figure}[H]
  \centering
  \includegraphics[width=0.95\textwidth]{./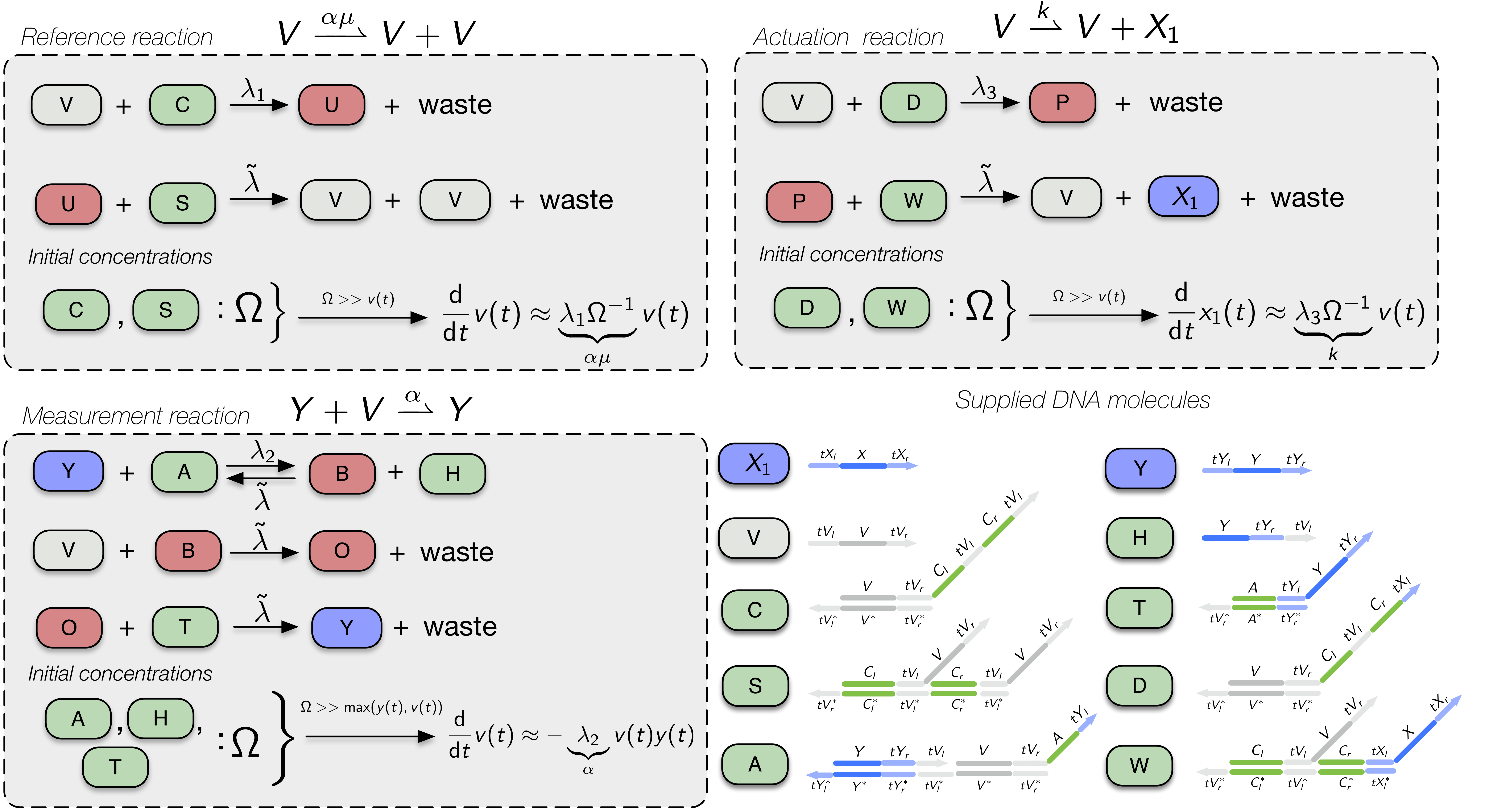}
  \caption{\small Integral controller implemented as a DNA strand displacement cascade. Signal strands (e.g. $\X{1},\ldots,\X{d}$, $\Vz$) consist of two distinct toehold domains (indicated by symbols starting with $t$) and one longer specificity domain (upper case symbols). Single-stranded signal molecules specifically bind to the corresponding multi-stranded complexes through Watson-Crick base pairing. Once bound, there is a certain chance that the originally bound top strand is displaced (i.e., branch migration) and replaced by the matching incoming signal strand. 
  Specific controller parameters can be achieved by modifying DNA sequences and/or initial concentrations of multi-stranded gate complexes. For simplicity, we assume all rate constants to be $\tilde{\lambda}=0.01 nM s^{-1}$ except those that need to be tuned to achieve the desired parameters (i.e., $\lambda_{1}-\lambda_{3}$). The legend shows the molecules that have to be initially added to the test tube. Intermediate species that are created as the system evolves are not shown in the legend.}\label{fig:BD:DNA}
\end{figure}

\blue{We performed computer simulations of the DNA-based controller for a system that is described by a one-dimensional death process $\Xz\rarrow{\gamma}\phib$ where the controlled species $\Yz$ coincides with $\Xz$ and hence $y(t)=x(t)$. By interconnecting the network and the controller together, we obtain the closed-loop network given, in terms the formal reactions, by
\begin{equation*}
\begin{array}{rcl}
  \Xz&\rarrow{\gamma}&\phib,\\
    \Vz&\rarrow{k}&\Vz+\Xz,\\
  \Xz+\Vz&\rarrow{\alpha}&\Xz,\\
  \Vz&\rarrow{\alpha\mu}&\Vz+\Vz.
\end{array}
\end{equation*}}
%

\blue{Using realistic binding rates from the literature \cite{Soloveichik:10}, the circuit was dimensioned such that it can be easily implemented and validated experimentally. As in our previous simulations, we checked the controller performance using various case studies related to tracking and adaptation (see Fig.~\ref{fig:BD:tracking}\textbf{a-c}). We found that the DNA circuit agrees very well with the ideal controller kinetics. Since multi-stranded DNA molecules are consumed during the hybridization steps, the circuit will ultimately loose its designed characteristics. This effect is particularly strong for small $\Omega$ as shown in Fig.~\ref{fig:BD:tracking}\textbf{c}.}

\begin{figure}
    \centering
 \includegraphics[width=0.95\textwidth]{./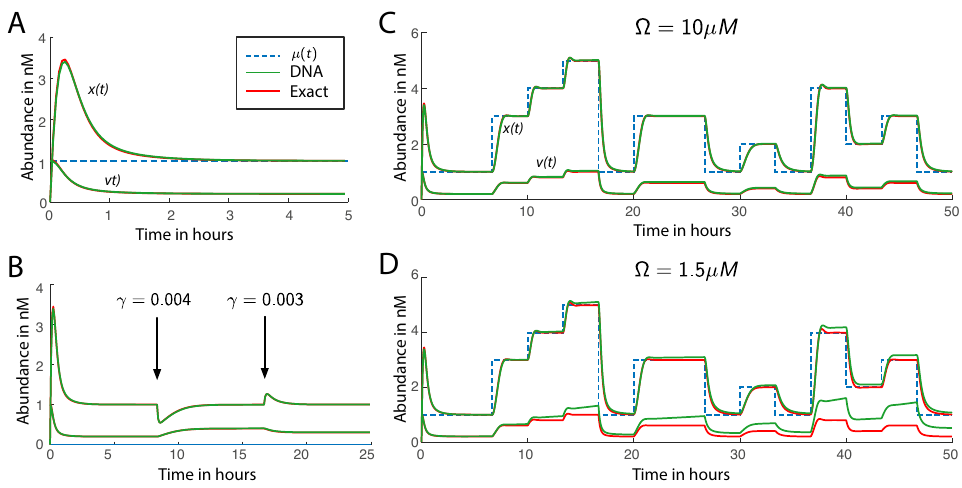}
    \caption{\small Simulation results of a DNA-based integral controller controlling a one-dimensional process $x(t)$, that degrades with rate $\gamma$, through its birth-rate (note that we have here $\Xz=\X{1}=\Yz$ as the controlled process consists of a single molecular species). Unless otherwise stated, the parameters $V(0) = 1nM$, $X(0) = 0nM$, $k=0.01s^{-1}$, $\gamma = 0.002 s^{-1}$, $\alpha = 3e-4 nM s^{-1}$, $\mu=1nM$, $\Omega=10\mu M$, $\tilde{\lambda}=0.01 nM s^{-1}$ are considered here. All plots show both $X(t)$ and the controller $V(t)$ for the exact and the DNA-based implementation. As long as the initial gate supply $\Omega$ is large enough, the DNA-based circuit resembles the exact dynamics at a remarkable precision. \textbf{A.} Transient behavior for a fixed $\mu=1nM$. \textbf{B.} Adaptation. The dynamics of $X(t)$ are perturbed at discrete time points by changing the value of $\gamma$ to $0.004 s^{-1}$ and $0.003 s^{-1}$, respectively. \textbf{C.} Long-term tracking behavior. The controlled death process is simulated for a randomly generated profile for the reference $\mu(t)$ and with $\Omega=10\mu M$. In this scenario, the kinetics of the DNA circuit agrees very well with the exact solution. \textbf{D.} Long-term tracking behavior. The controlled death process is simulated for a randomly generated profile for the reference $\mu(t)$ and with $\Omega=1.5\mu M$. In this scenario, the kinetics of the DNA circuit deviates quickly from the exact solution, illustrating then the necessity of supplying a high enough initial gate concentration $\Omega$.}\label{fig:BD:tracking}
\end{figure}

\section*{Acknowledgements}

The authors acknowledge funding support from the Swiss National Science Foundation grant 200021-157129. CZ is funded by a grant from the Swiss SystemsX.ch initiative, evaluated by the Swiss National Science Foundation.

\newpage

\renewcommand{\thesection}{S\arabic{section}}
\renewcommand{\thetheorem}{\thesection.\arabic{theorem}}
\renewcommand{\thefigure}{S\arabic{figure}}

\numberwithin{equation}{section}

\begin{center}
  {\LARGE Design of a synthetic integral feedback circuit: dynamic analysis and DNA implementation\\ --\\ Supplementary Material}\\\ \\
  {\large Corentin Briat, Christoph Zechner, Mustafa Khammash}
\end{center}

\vfill
\tableofcontents
\vfill

%

\newpage

\section{Preliminaries}

\subsection{Deterministic reaction networks}

Reaction networks \cite{Feinberg:72,Horn:72,Goutsias:13} are very powerful modeling tools that are able to describe a wide variety of processes. A reaction network consists of a family of $d$ distinct molecular species $\X{1},\dots,\X{d}$ which interact through $K$ reaction channels, denoted by $\R{1},\ldots,\R{K}$. In the deterministic case, each reaction is described by a \emph{stoichiometric  vector} $\zeta_k = (\zeta_{k,1},\dots,\zeta_{k,d})$, and a propensity function $\lambda_k(x)$. The stoichiometric vector describes here the direction of change of the state under the influence of the reaction while the propensity function describes the strength of the corresponding reaction (the modulus of the vector).  The state of the process is denoted by $x\in\mathbb{R}^d$ and its evolution is governed by the Reaction Rate Equations (RRE) defined as
\begin{equation}\label{eq:RNmodel1}
\begin{array}{rcl}
  \dot{x}(t) &=&  \sum_{k=1}^K \lambda_k(x(t)) \zeta_k,\ x(0) =x_0\\
                    &=:&f(x),\ x(0) =x_0
\end{array}
\end{equation}
For biological models or, more generally, population models, the state should remain nonnegative and, therefore, the overall should satisfy the condition that, for all $i=1,\ldots,d$, we have that $f_i(x)\ge0$ for all $x\ge0$ with $x_i=0$. In this regard, the propensity functions should be accommodated with this constraint which is naturally satisfied for systems with mass-action kinetics. This class of systems is known as \emph{positive systems} and have been considered for the modeling of a wide variety of real world processes such as the modeling of populations \cite{Murray:02}, biological \cite{Briat:12c,Briat:13h}, communication networks \cite{Shorten:06,Briat:13f}, etc.\\

\noindent When the reactions  $\R{1},\ldots,\R{K}$ are, at most, unimolecular (i.e. zeroth- and first-order reactions), then the dynamics of the network is described by the following model
\begin{equation}\label{eq:RNmodel2}
\dot{x}(t) =  Ax+b,\ x(0) = x_0.
\end{equation}
where $A\in\mathbb{R}^{d\times d}$ is Metzler (i.e. the off-diagonal elements are nonnegative) and $b\in\mathbb{R}^d$. The fact that the matrix $A$ is Metzler comes from the fact that the system is a linear positive system \cite{Farina:00}.

%
%

\subsection{The control problem (unimolecular network case)}

The setup addressed in this paper is the following. Assume that we are given a unimolecular reaction network having the following state-space representation
\begin{equation}\label{eq:mainsystL}
  \begin{array}{lcl}
    \dot{x}(t)&=&Ax(t)+Bu(t)\\
    y(t)&=&Cx(t)\\
    x(0)&=&x_0
  \end{array}
\end{equation}
where $x\in\mathbb{R}_{\ge0}^n$ is the state of the system, $u\in\mathbb{R}$ is the control input and $y\in\mathbb{R}$ is the measured output that has to be controlled. Note that the system has a single input and a single output (a so-called SISO system). As explained above the matrix $A$ is Metzler since the system is positive. We moreover assume that the matrices $B$ and $C$ are nonnegative as well. In this case, the state $x(t)$ and the output  $y(t)$ will be nonnegative only if $u(t)$ is nonnegative as well. This is consistent with the fact that the control input will be taken as a rate constant in the nonnegative vector $b$ in \eqref{eq:RNmodel2};  see e.g. \cite{Briat:15e} for a similar setup in the stochastic setting.\\

\noindent We are now in position to state the control problem we will be interested in:
\begin{problem}\label{prob:1}
  Let $\mu>0$. Find a controller, that measures the output $y$ to act on the system \eqref{eq:mainsystL} by modulating the value of the control input $u$, such that
  \begin{enumerate}[(St1)]
   \item the control input $u$ is nonnegative at all times,\label{controlobj:1}
   \item the controller can be implemented in terms of a reaction network with mass-action kinetics, and\label{controlobj:2}
    \item the closed-loop network, consisting of the interconnection of the network described by \eqref{eq:mainsystL} and the controller, has the following properties\label{controlobj:3}
\begin{enumerate}[(St3a)]
    \item the output $y$ is regulated around the fixed set-point $\mu>0$; i.e. $y(t)\to y^*=\mu$ as $t\to\infty$;
    \item the (unique) equilibrium point $x^*$ associated with the value $y^*=\mu$ is (locally) asymptotically stable;
    \item the output $y$ perfect adapt in response to changes in the parameters (robustness) and the presence of constant disturbances acting on the input and on the state of the system (disturbance rejection).
\end{enumerate}
    \end{enumerate}
\end{problem}
The objectives in statement \emph{(St\ref{controlobj:3})} are very standard in control theory. Indeed, the closed-loop network will (in general) satisfy statement \emph{(St\ref{controlobj:3})} if the controller implements an integral action and the parameters of the controller are chosen such that the stability of the closed-loop system is ensured. However, the two first statements are less standard since it is usually not required that the control input be nonnegative for all time and that the controller be implementable in terms of a reaction network with mass-action kinetics. These constraints are specific to our control problem and will be enforced in Section \ref{sec:integral}.\\

\noindent We now give an illustrative example for our control problem:
\begin{example}
  Let us consider the gene expression network
\begin{equation}
  \phib\rarrow{k_1}\X{1},\   \X{1}\rarrow{\gamma_1}\phib,\ \X{1}\rarrow{k_2}\X{1} + \X{2}\ \textnormal{and}\ \X{2}\rarrow{\gamma_2}\phib
\end{equation}
where $\X{1}$ and $\X{2}$ represent mRNA and the associated protein molecules, respectively. The reaction rates $k_1,\gamma_1,k_2$ and $\gamma_2$ are positive real numbers. Assume now that we would like the protein concentration to be regulated around some steady-state value $\mu>0$. By then considering the rate $k_1$ to be our control input $u$, we get the model
\begin{equation}
  \begin{array}{rcl}
  \begin{bmatrix}
  \dot{x}_1(t)\\
\dot{x}_2(t)
\end{bmatrix}&=&\begin{bmatrix}
  -\gamma_1 & 0\\
k_2 & -\gamma_2
\end{bmatrix}\begin{bmatrix}
  x_1(t)\\
x_2(t)
\end{bmatrix}+\begin{bmatrix}
  1\\
0
\end{bmatrix}u(t)\\
y(t)&=&\begin{bmatrix}
  0 & 1
\end{bmatrix}x(t)
\end{array}
\end{equation}
where $x_1(t)$ and $x_2(t)$ are the concentrations of the mRNA and protein molecules at time $t$, respectively. We can then immediately see that this control problem falls in the framework considered in this section.
\end{example}

\begin{remark}
  It is important to stress that we have restricted our attention only to unimolecular networks only for the sake of developing a general theory. The problem \ref{prob:1}, however, does not make any assumption on the dynamics of the reaction network to be controlled. It will be also shown later that the proposed controller can also be applied to more general reaction networks, emphasizing then its generality.
\end{remark}

\section{Integral control and nonlinear positive integral control}\label{sec:integral}

We quickly review here the benefits of integral control and explain why there is a problem when such an integral control structure is considered for controlling a system restricted to have a nonnegative control input. We also emphasize the problem of the implementing the integral controller in terms of a reaction network with mass-action kinetics.

\subsection{Integral control of linear systems}

Integral control \cite{Astrom:95} is a cornerstone of control theory as it allows to ensure that the output of the controlled system will reach the desired state value provided that the closed-loop system is stable. On the top of that, constant disturbances on the input and on the state will be automatically rejected and sufficiently small plant perturbations will not affect the steady state value of the controlled output. \\

\noindent To illustrate this, let us consider a SISO (not necessarily positive) linear system $\Sigma$ described by the rational strictly-proper transfer function $G(s)$. For convenience, we assume that the system is asymptotically stable and that $G(0)\ne0$. Note that, in this case, there exist an integer $n$ and matrices $A\in\mathbb{R}^{n\times n}$, $B\in\mathbb{R}^{n\times 1}$ and $C\in\mathbb{R}^{1\times n}$ such that the system
\begin{equation}\label{eq:dkskdsqmldksqldm}
\begin{array}{rcl}
  \dot{x}&=&Ax+Bu\\
    y&=&Cx
\end{array}
\end{equation}
is a realization of the process $\Sigma$; i.e. we have that $G(s)=C(sI-A)^{-1}B$. In other words, the Laplace transform of the impulse response $h(t)=Ce^{At}B$ is equal to the transfer function $G(s)$. This emphasizes the correspondence between the transfer function considered in this section and the state-space representation considered in the previous section.\\

\noindent We now show that the integral controller
\begin{equation}\label{eq:intdsmdds}
\begin{array}{rcl}
     u(t)&=&kI(t)\\
    \dot{I}(t)&=&\mu-y(t)
\end{array}
\end{equation}
where $k$ is the gain of the controller to be designed, solves the objective problems in Problem \ref{prob:1}, Statement \emph{(St\ref{controlobj:3})}. Note that the transfer function of the map $(\mu-y)\mapsto u$ is given by $C(s):=k/s$ as this is an integrator.

By interconnecting the system $\Sigma$ having realization \eqref{eq:dkskdsqmldksqldm} with the controller \eqref{eq:intdsmdds} yields the control loop depicted in Fig.~\ref{fig:loopnophi} where two additive disturbances are also depicted. The input disturbance  $\delta_u$ acts on the input while the output disturbance $\delta_y$ acts on the output.

\begin{figure}[h]
  \centering
  \includegraphics[width=0.75\textwidth]{./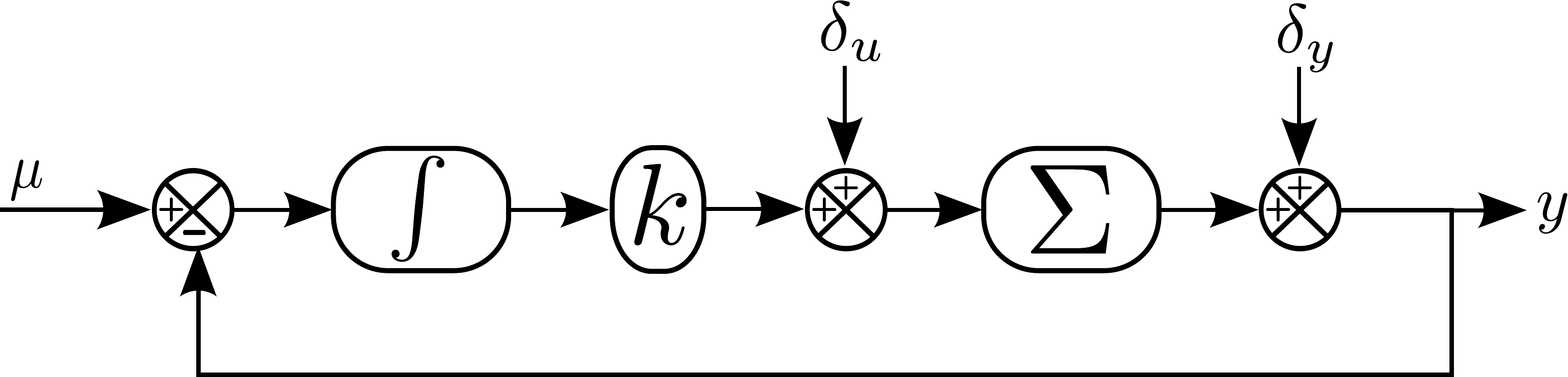}
  \caption{Integral feedback loop.}\label{fig:loopnophi}
\end{figure}

We propose to show now that the output will track the references and that constant disturbances will be rejected. The following expression for the output can be obtained
\begin{equation}
  \widehat{y}(s)=\dfrac{G(s)C(s)}{1+G(s)C(s)}\dfrac{\mu}{s}+\dfrac{G(s)}{1+G(s)C(s)}\widehat{\delta}_u(s)+\dfrac{1}{1+G(s)C(s)}\widehat{\delta}_y(s)
\end{equation}
where $\widehat{\cdot}$ denotes the Laplace transform of the corresponding signals. Assuming that the disturbances are constant, we then have that $\widehat{\delta}_u(s)=d_u/s$ and $\widehat{\delta}_y(s)=d_y/s$ for some $d_u,d_y\in\mathbb{R}$. We then get that
\begin{equation}
  \widehat{y}(s)=\dfrac{G(s)k}{s(s+G(s)k)}\mu+\dfrac{G(s)}{s+G(s)k}d_u+\dfrac{1}{s+G(s)k}d_y.
\end{equation}
Assuming then that $k$ is such that the closed-loop system is asymptotically stable, then we can apply the final value theorem which states that
\begin{equation}
  \lim_{t\to\infty}y(t)=\lim_{s\to0}s\widehat{y}(s)
\end{equation}
to get that
\begin{equation}
\begin{array}{lcl}
  \lim_{t\to\infty}y(t)&=&\underbrace{\lim_{s\to0}\dfrac{G(s)k}{s+G(s)k}\mu}_{\mbox{$\mu$}}+\underbrace{\lim_{s\to0}\dfrac{sG(s)}{s+G(s)k}d_u}_{\mbox{$0$}}+\underbrace{\lim_{s\to0}\dfrac{s}{s+G(s)k}d_y}_{\mbox{$0$}}\\
  &=&\mu
\end{array}
\end{equation}
where we have used the fact that $G(0)\ne0$ by assumption. We can then readily see that, as desired, the asymptotic value for the output $y$ is equal to $\mu$, and is independent of the constant disturbance levels $d_u$ and $d_y$. It is also interesting to see the behavior of the control input with respect to the disturbances. Applying standard tools, we get that
\begin{equation}
  \widehat{u}(s)=\dfrac{k}{s+kG(s)}\dfrac{\mu}{s}-\dfrac{kG(s)}{s+kG(s)}\dfrac{d_u}{s}-\dfrac{k}{s+kG(s)}\dfrac{d_y}{s}.
\end{equation}
Using again the final value theorem, we get that
\begin{equation}
  \lim_{t\to\infty}u(t)=\dfrac{\mu}{G(0)}-d_u-\dfrac{d_y}{G(0)}
\end{equation}
where we can see that the integral controller finds the correct equilibrium control input value so that regulation of the output around the desired set-point value is achieved. Interestingly, we can see that the controller is able to estimate the static-gain $G(0)$ of the system and the values of the amplitudes of the constant disturbances $d_u$ and $d_y$ without any prior knowledge about them. All what needs to be taken care of is establishing the stability of the closed-loop system. This is one great property of integral control.

\subsection{A nonlinear positive integral controller}

We have shown before the properties of integral control in terms of output regulation and constant disturbance rejection which we would like to have to solve the control problem stated in Problem \ref{prob:1}. However, the constraints of Statement \emph{(St\ref{controlobj:1})} and Statement \emph{(St\ref{controlobj:2})} are not fulfilled by the previously discussed integral controller. It is easy to see that if $d_u$ or $d_y$ are too large, then the value of the control input at equilibrium will be negative. Even more importantly, the control input may take negative values even in the absence of disturbances during the transient phase of the system if the output exceeds by too much the set-point value. A way for resolving this issue was to introduce an ON-OFF static nonlinearity between the controller and the system; see e.g. \cite{Briat:12c,Briat:13h}. Similar ideas have also been recently considered in \cite{Guiver:15}. However, this is only possible in-silico as it seems rather difficult to implement this ON-OFF nonlinearity in-vivo. This motivates the introduction of an alternative integral controller with the additional constraint of being implementable in terms of chemical reactions. An integral controller structure that meets these requirements referred to as \emph{antithetic integral controller} has been proposed in \cite{Briat:15e} in the stochastic setting. We propose here a different approach based on the following nonlinear positive integral controller:
\begin{equation}\label{eq:ic2}
\begin{array}{rcl}
     u(t)&=&kv(t)\\
    \dot{v}(t)&=&\alpha v(t)(\mu-y(t))
\end{array}
\end{equation}
where $y$ is the controlled output, $v$ is the state of the integrator and $\mu$ is the desired set-point. The parameters $\alpha$ and $k$ are design parameters. We can immediately notice the similarities between the linear integrator \eqref{eq:intdsmdds} and the nonlinear positive integrator \eqref{eq:ic2}. Indeed, the latter can be viewed as a scaled version of the former where the right-hand side of \eqref{eq:intdsmdds} is rescaled by $\alpha I(t)$. The controller also has a nice population dynamics interpretation. Indeed, the reference reaction can be interpreted as a reproduction reaction and the measurement reaction as a predation reaction.


\paragraph{Reaction network implementation.} Interestingly, the model is polynomial and, therefore, can be described by the reaction network with mass-action kinetics given by
\begin{equation}
  \underbrace{\V{}\rarrow{\alpha\mu}\V{} + \V{}}_{\mbox{reference reaction}}\hspace{-3mm},\quad  \underbrace{\V{} + \Y{}\rarrow{\alpha}\Y{}}_{\mbox{measurement reaction}}\ \textnormal{and}\ \underbrace{\V{}\rarrow{k}\V{} + \sum_{i=1}^db_i\X{i}}_{\mbox{actuation reaction}}
\end{equation}
where $\V{}$ and $\Y{}$ are the species associated with the controller state and the output of the reaction network, respectively. The term $b_i$ is simply the $i$-th entry of the vector $B$ in \eqref{eq:mainsystL}.

\section{Nonlinear positive integral control of unimolecular reaction networks}

In this section, several results on the control of unimolecular reaction networks are obtained. The equilibrium points and their stability are first analyzed. Disturbance rejection and perfect adaptation properties as well as the controller metabolic cost are considered next.


\subsection{Assumptions}

We make here the following assumptions:
\begin{hyp}
  The matrix $A$ is Hurwitz stable; i.e. all its eigenvalues are located in the open left-half plane.
\end{hyp}
\begin{hyp}
  The triplet $(A,B,C)$ is output controllable; i.e. $CA^{-1}B\ne0$.
\end{hyp}
It is important to stress that the above assumptions are necessary conditions ensuring that our control problem is solvable. If, indeed, the matrix $A$ were unstable, then it would not be possible to find an integral controller of the form \eqref{eq:ic2}, except in some very particular cases. For instance, considering an unstable first-order linear system
\begin{equation}
\dot{x}(t)=x(t)+kv(t)
\end{equation}
where $v$ obeys \eqref{eq:ic2}. Since $x(t)\ge0$ and $v(t)\ge0$, then we can immediately see that it is not possible to stabilize the system around the value $\mu$ since we would need the control to be negative when $x(t)>\mu$. Note that if we consider instead the system
\begin{equation}
\dot{x}(t)=-x(t)+kv(t)
\end{equation}
then regulating $x(t)$ around $\mu$ becomes possible. The second assumption, finally, means that the system responds to an input. If $CA^{-1}B=0$, then this means that the transfer function $C(sI-A)^{-1}B$ is identically 0 and therefore that the output system does not respond to changes in the input.

\subsection{Equilibrium points}\label{sec:SM:equilibrium_points}

Let us start with the existence of equilibrium points:
\begin{proposition}
  The closed-loop system \eqref{eq:mainsystL}-\eqref{eq:ic2} admits two equilibrium points:
  \begin{enumerate}
    \item the first one is the zero-equilibrium point
    \begin{equation}\label{eq:trivial_eqpt}
      (x^*,v^*)=(0,0)
    \end{equation}
  \item the second one is the positive-equilibrium point
  \begin{equation}\label{eq:positive_eqpt}
     (x^*,v^*)=\left(\dfrac{A^{-1}B\mu}{CA^{-1}B},\dfrac{-\mu}{CA^{-1}Bk}\right)
  \end{equation}
  which exists provided that $\mu>0$.
  \end{enumerate}
\end{proposition}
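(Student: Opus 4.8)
The plan is to find all equilibrium points by setting both time derivatives in the closed-loop system \eqref{eq:mainsystL}-\eqref{eq:ic2} to zero and solving the resulting algebraic system. Writing out the closed-loop dynamics explicitly, we have $\dot{x}=Ax+Bkv$ (since $u=kv$) together with $\dot{v}=\alpha v(\mu-y)$ where $y=Cx$. At equilibrium both right-hand sides vanish, giving the two equations $Ax^*+Bkv^*=0$ and $\alpha v^*(\mu-Cx^*)=0$.

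First I would analyze the second equation. Since $\alpha>0$, it factors as a product that is zero, so either $v^*=0$ or $Cx^*=\mu$. These two cases give the two equilibrium branches. In the case $v^*=0$, the first equation reduces to $Ax^*=0$; because $A$ is Hurwitz (Assumption 1), it is invertible, so the only solution is $x^*=0$, yielding the zero-equilibrium point \eqref{eq:trivial_eqpt}. In the case $Cx^*=\mu$, I would solve the first equation for $x^*$ in terms of $v^*$: since $A$ is invertible, $x^*=-A^{-1}Bkv^*$. Substituting this into the constraint $Cx^*=\mu$ gives $-CA^{-1}Bkv^*=\mu$, hence
\begin{equation*}
  v^*=\dfrac{-\mu}{CA^{-1}Bk}.
\end{equation*}
Here Assumption 2 ($CA^{-1}B\neq0$) is exactly what guarantees this expression is well-defined, and the nonvanishing of the denominator is the only place that hypothesis enters. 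Back-substituting yields
\begin{equation*}
  x^*=-A^{-1}Bk\cdot\dfrac{-\mu}{CA^{-1}Bk}=\dfrac{A^{-1}B\mu}{CA^{-1}B},
\end{equation*}
which is precisely \eqref{eq:positive_eqpt}.

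The final point to check is the claim that the second equilibrium exists ``provided that $\mu>0$,'' and more implicitly that it deserves to be called the \emph{positive} equilibrium. I would verify that $Cx^*=\mu$ holds by direct computation ($Cx^*=\mu\,CA^{-1}B/CA^{-1}B=\mu$), confirming the regulation property at equilibrium. Strictly speaking, the proposition as stated only asserts that these two points solve the equilibrium equations and that the second exists when $\mu>0$; the actual positivity of all components of $(x^*,v^*)$ (i.e. that they lie in the nonnegative orthant) relies on the sign structure coming from $A$ being Metzler and Hurwitz and $B,C$ nonnegative, which forces $-A^{-1}\ge0$ entrywise and hence $CA^{-1}B<0$, making $v^*>0$. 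I expect the only genuine subtlety to be this sign bookkeeping: everything else is a routine linear solve. Since the statement to be proved merely lists the two equilibria and their defining formulas, the core of the argument is the clean case split on $v^*=0$ versus $Cx^*=\mu$, and invoking invertibility of $A$ and the output-controllability condition at the two respective steps.
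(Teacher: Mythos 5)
Your proof is correct; in fact the paper states this proposition without any proof at all, treating it as an immediate consequence of the equilibrium equations, and your case split on $v^*=0$ versus $Cx^*=\mu$ together with the invertibility of $A$ (Hurwitz) and Assumption 2 ($CA^{-1}B\neq0$) is exactly the routine verification being taken for granted. Your closing sign-structure remark (that $A$ Metzler and Hurwitz forces $-A^{-1}\ge 0$ entrywise, hence $CA^{-1}B<0$ and $v^*>0$) goes slightly beyond what the statement asks but correctly justifies calling the second point the positive equilibrium.
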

Note that when $\mu=0$, the zero-equilibrium is retrieved from the positive one.

\subsection{Local stability of the equilibrium points}\label{sec:SM:local_stability}

\subsubsection{Local stability of the zero-equilibrium point}\label{sec:SM:local_stability_zero}

We establish now the stability/instability of the equilibrium points. Let us start with the trivial equilibrium point:
\begin{proposition}\label{prop:stab_zero_1}
The zero-equilibrium point \eqref{eq:trivial_eqpt} is unstable for all $\mu,\alpha,k>0$.
\end{proposition}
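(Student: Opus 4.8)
The plan is to prove instability via Lyapunov's indirect method, i.e.\ by linearizing the closed-loop vector field at the origin and exhibiting an eigenvalue with strictly positive real part. Writing the closed-loop dynamics of \eqref{eq:mainsystL}--\eqref{eq:ic2} out explicitly gives $\dot{x}=Ax+Bkv$ together with $\dot{v}=\alpha v(\mu-Cx)$, which I regard as a smooth vector field $F:\mathbb{R}^n\times\mathbb{R}\to\mathbb{R}^{n+1}$. First I would compute its Jacobian at $(x,v)=(0,0)$. The $x$-block contributes $\partial(\dot x)/\partial x=A$ and $\partial(\dot x)/\partial v=Bk$; for the scalar $v$-equation, $\partial(\dot v)/\partial x=-\alpha v\,C$ vanishes at $v=0$, while $\partial(\dot v)/\partial v=\alpha(\mu-Cx)$ equals $\alpha\mu$ at $x=0$.

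The key observation is that the Jacobian is block upper-triangular,
\begin{equation}
  J=\begin{bmatrix} A & Bk \\ 0 & \alpha\mu \end{bmatrix},
\end{equation}
so its spectrum is the union of the spectrum of $A$ and the single eigenvalue $\alpha\mu$. Since $\alpha>0$ and $\mu>0$, the value $\alpha\mu$ is real and strictly positive, hence lies in the open right-half plane. Therefore $J$ has at least one eigenvalue with positive real part regardless of the location of the eigenvalues of $A$, and the standard linearization (first Lyapunov) theorem lets me conclude that the zero-equilibrium is unstable for every $\mu,\alpha,k>0$.

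Since this argument is entirely routine, there is no real obstacle; the only point that needs care is that invoking the indirect method requires a strictly-positive-real-part eigenvalue (not merely a nonnegative one), which is exactly what $\alpha\mu>0$ provides. If one prefers to avoid citing Hartman--Grobman, I would instead give a direct nonlinear escape argument: for initial data with $v(0)>0$ small and $x(0)$ small, continuity keeps $\mu-Cx(t)$ close to $\mu>0$ on a short time interval, so $\dot{v}=\alpha v(\mu-Cx)>0$ there and $v$ grows away from $0$; this shows trajectories starting arbitrarily close to the origin leave a fixed neighborhood, which is precisely instability. Either route closes the proof, and the first is the cleaner of the two to write up.
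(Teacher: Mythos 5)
Your proposal is correct and follows essentially the same route as the paper: the paper likewise linearizes at the origin, obtains the block upper-triangular matrix $\begin{bmatrix} A & Bk \\ 0 & \alpha\mu \end{bmatrix}$ whose spectrum is $\lambda(A)\cup\{\alpha\mu\}$, and concludes instability from the positive eigenvalue $\alpha\mu$. Your added remarks (the precise hypothesis of the indirect method, and the alternative nonlinear escape argument) are sound but not needed beyond what the paper does.
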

\begin{proof}
The linearized system about the zero-equilibrium point \eqref{eq:trivial_eqpt} is given by
  \begin{equation}
  \begin{bmatrix}
    \dot{\tilde{x}}(t)\\
    \dot{\tilde{v}}(t)
  \end{bmatrix}=\underbrace{\begin{bmatrix}
    A & Bk\\
    0 & \alpha \mu
  \end{bmatrix}}_{\mbox{$M_0$}} \begin{bmatrix}
    \tilde{x}(t)\\
    \tilde{v}(t)
  \end{bmatrix}
\end{equation}
where $\tilde x:=x-x^*$ and $\tilde v:=v-v^*$. Clearly, the eigenvalues of $M_0$ are given by $\lambda(A)\cup\{\alpha\mu\}$ where $\lambda(A)$ is the set of eigenvalues of $A$. The linearized systems is therefore unstable since $\alpha\mu>0$. The proof is complete.
\end{proof}

\subsubsection{Local stability of the positive equilibrium point}\label{sec:SM:local_stability_positive}

We address now the stability of the positive-equilibrium point:
\begin{proposition}\label{prop:stab_positive_1}
  Let $\mu>0$ be given. Then, there exists an $\bar{\alpha}=\bar{\alpha}(\mu)>0$ such that the positive equilibrium point \eqref{eq:positive_eqpt}
%
  is locally exponentially stable provided that $\alpha\in(0,\bar{\alpha})$ where
    \begin{equation}\label{eq:baralpha1}
      \bar{\alpha}:=\sup\left\{\nu>0:\ \begin{bmatrix}
        A & B\\
          \dfrac{\nu\mu C}{CA^{-1}B} & 0
      \end{bmatrix}\ \textnormal{Hurwitz stable}\right\}.
    \end{equation}
\end{proposition}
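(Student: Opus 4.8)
The plan is to apply Lyapunov's indirect method: linearize the closed-loop vector field \eqref{eq:mainsystL}--\eqref{eq:ic2} at the positive equilibrium \eqref{eq:positive_eqpt}, exhibit a gain-dependent similarity that strips the Jacobian of its $k$-dependence and converts it into the matrix appearing in \eqref{eq:baralpha1}, and finally show that this matrix is Hurwitz for all sufficiently small $\alpha>0$.

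First I would compute the Jacobian of $\dot{x}=Ax+Bkv$, $\dot{v}=\alpha v(\mu-Cx)$ at $(x^*,v^*)$. With $\tilde{x}=x-x^*$ and $\tilde{v}=v-v^*$ the linearization is governed by
\[
M=\begin{bmatrix} A & Bk\\ -\alpha v^*C & \alpha(\mu-Cx^*)\end{bmatrix}.
\]
Because $Cx^*=\mu$ at the positive equilibrium, the $(2,2)$ block vanishes, and inserting $v^*=-\mu/(CA^{-1}Bk)$ gives $-\alpha v^*C=\alpha\mu C/(CA^{-1}Bk)$, so that
\[
M=\begin{bmatrix} A & Bk\\ \dfrac{\alpha\mu C}{CA^{-1}Bk} & 0\end{bmatrix}.
\]
The crucial step is to conjugate by the diagonal matrix $P=\diag(I,k)$: a direct block computation gives
\[
PMP^{-1}=\begin{bmatrix} A & B\\ \dfrac{\alpha\mu C}{CA^{-1}B} & 0\end{bmatrix},
\]
which is exactly the matrix in \eqref{eq:baralpha1} evaluated at $\nu=\alpha$. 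Since similarity preserves eigenvalues, the spectrum of $M$ is independent of the gain $k$, and by Lyapunov's indirect method the positive equilibrium is locally exponentially stable precisely when this matrix is Hurwitz.

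It then remains to show $\bar{\alpha}>0$ and that the Hurwitz property persists on all of $(0,\bar{\alpha})$. Denoting by $N(\nu)$ the matrix in \eqref{eq:baralpha1} and applying the Schur complement, its characteristic polynomial factors as
\[
\det(sI-N(\nu))=\det(sI-A)\left(s-\dfrac{\nu\mu}{CA^{-1}B}\,C(sI-A)^{-1}B\right).
\]
At $\nu=0$ the spectrum is $\lambda(A)\cup\{0\}$; since $A$ is Hurwitz the $n$ eigenvalues near $\lambda(A)$ stay in the open left half-plane for small $\nu$, while the branch leaving the simple eigenvalue at the origin solves $s=\tfrac{\nu\mu}{CA^{-1}B}C(sI-A)^{-1}B$ and hence satisfies, to first order, $s\approx-\nu\mu<0$. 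Thus $N(\nu)$ is Hurwitz for all sufficiently small $\nu>0$, which forces $\bar{\alpha}>0$. As $\nu$ grows the eigenvalues move continuously and can leave the open left half-plane only by crossing the imaginary axis, so identifying $\bar{\alpha}$ with the first such crossing gives that $N(\alpha)$ is Hurwitz for every $\alpha\in(0,\bar{\alpha})$.

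I expect the main obstacle to be not the similarity reduction, which is mechanical, but this last point: making rigorous that $(0,\bar{\alpha})$ is a genuine stability interval rather than merely containing points where $N$ is Hurwitz arbitrarily close to $\bar\alpha$. This is a root-locus/continuity argument, and the honest reading is that $\bar{\alpha}$ equals the infimum of the $\nu>0$ at which an eigenvalue of $N(\nu)$ first reaches the imaginary axis; below that value the number of open-left-half-plane eigenvalues is locally constant and equal to $n+1$, yielding the claimed stability on the whole interval.
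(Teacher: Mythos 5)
Your proof is correct and takes essentially the same route as the paper's: the same Jacobian at the positive equilibrium, the same similarity by $\diag(I,k)$ to eliminate the gain, and a first-order perturbation argument at $\alpha=0$ showing the simple zero eigenvalue moves like $-\mu\alpha$ into the open left-half plane, hence $\bar{\alpha}>0$. The only divergences are minor: you extract the first-order root motion from the Schur-complement factorization of the characteristic polynomial (the paper uses the left/right-eigenvector perturbation formula at this step, reserving the Schur factorization for its computation of $\bar{\alpha}$ in the following proposition), and your closing root-locus caveat --- that stability on all of $(0,\bar{\alpha})$ really rests on reading $\bar{\alpha}$ as the first imaginary-axis crossing rather than merely the supremum in \eqref{eq:baralpha1} --- is handled more explicitly by you than by the paper, which simply asserts that the maximal value is "where the matrix is not Hurwitz stable anymore."
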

\begin{proof}
The linearized system about the positive equilibrium point  \eqref{eq:positive_eqpt} is given by
  \begin{equation}\label{eq:locstab1}
  \begin{bmatrix}
    \dot{\tilde{x}}(t)\\
    \dot{\tilde{v}}(t)
  \end{bmatrix}=\underbrace{\begin{bmatrix}
    A & Bk\\
    \dfrac{\alpha C\mu}{CA^{-1}Bk} & 0
  \end{bmatrix}}_{\mbox{$M_p$}}\begin{bmatrix}
    \tilde{x}(t)\\
    \tilde{v}(t)
  \end{bmatrix}
\end{equation}
where $\tilde x:=x-x^*$ and $\tilde v:=v-v^*$. By pre- and post-multiplying $M_p$ by $\diag(I_n,k)$ and $\diag(I_n,k^{-1})$, we get that the eigenvalues of $M_p$ are the same as those of
\begin{equation}
\tilde{M}_p:=\begin{bmatrix}
    A & B\\
    \dfrac{\alpha C\mu}{CA^{-1}B} & 0
  \end{bmatrix}
\end{equation}
and, therefore, that the gain $k>0$ does not affect the eigenvalues (the stability) of $M_p$. Hence, we only need to characterize the dependence of the eigenvalues on $\alpha,\mu>0$.

We prove first that $\alpha$ needs to be positive. To do so, we use a perturbation argument to show that the matrix $M_p$ is Hurwitz stable for some sufficiently small $\alpha>0$. When $\alpha=0$, the matrix $M_p$ has $n$ stable eigenvalues (those of $A$) and one eigenvalue at $\lambda_0=0$. The eigenvalue $\lambda_0$ has normalized left- and right-eigenvectors given by
\begin{equation}
\ u_\ell=\begin{bmatrix}
    0_{1\times n} & 1
  \end{bmatrix}\ \textnormal{and}\  u_r=\begin{bmatrix}
    -A^{-1}B \\ 1
  \end{bmatrix}
\end{equation}
respectively. The theory of perturbation of simple eigenvalues says that the eigenvalue $\lambda_0$ locally changes under the effect of a perturbation of $\eps$ on $\alpha$ around $\alpha=0$ according to the relation
\begin{equation}
  \lambda(\eps)=\lambda_0+\left(u_\ell\begin{bmatrix}
    0 & 0\\
    \dfrac{C\mu}{CA^{-1}B} & 0
  \end{bmatrix}u_r\right)\eps+o(\eps)
  \end{equation}
  where $o(\eps)$ is the little-o notation. We therefore get that $\lambda(\eps)=-\mu\eps+o(\eps)$ and therefore that the zero-eigenvalue moves to the open left-half plane for some sufficiently small $\eps>0$. This means that there exists a sufficiently small $\bar{\alpha}=\bar{\alpha}(\mu)>0$ such that for all $\alpha<\bar{\alpha}$, the matrix $M_p$ is Hurwitz stable.

  We know now that $\alpha$ must be positive. Its maximal value can be computed by simply looking at the value for $\alpha$ for which the matrix $M_p$ is not Hurwitz stable anymore. This then leads us to the characterization \eqref{eq:baralpha1}. The proof is complete.
\end{proof}

\begin{remark}
  Unlike in standard integral control for linear systems, the stability depends on the reference value $\mu$. It is therefore important here to consider the possible range of values for $\mu$ before designing the controller, i.e. choosing $\alpha$. This is a consequence of the fact that the controller is nonlinear.
\end{remark}

\textbf{Computing $\boldsymbol{\bar{\alpha}}$.} The upper-bound $\bar{\alpha}$ can be found by several ways. When the matrix is of small dimension, then the Routh-Hurwitz criterion can be considered. When the matrix is too large, numerical methods should be considered instead. A brute force approach could consider a bisection approach with, at each step, the evaluation of the eigenvalues of the matrix in \eqref{eq:locstab1}. A certainly more elegant approach relies on the calculation of the so-called \emph{stability crossings}. The idea is to evaluate the characteristic polynomial of the matrix in \eqref{eq:locstab1}, that we denote by $P(s,\alpha)$, where $(s,\alpha)\in\mathbb{C}\times\mathbb{R}_{\ge0}$, and to find a pair $(\omega,\alpha)\in\mathbb{R}_{\ge0}^2$ such that $P(j\omega,\alpha)=0$; i.e. find a value for $\alpha$ for which the matrix has a pair of conjugate eigenvalues on the imaginary axis. This leads us to the following result:
\begin{proposition}\label{prop:alpha_bar}
The value $\bar{\alpha}$ is given by
  \begin{equation}
  \bar{\alpha}=\dfrac{1}{\mu}\left\{\begin{array}{lcl}
    \dfrac{D_I(\omega_*)\omega_*}{N_R(\omega_*)}&\textnormal{if}&N_R(\omega_*)\ne0\\
    \dfrac{-D_R(\omega_*)\omega_*}{N_I(\omega_*)}&\textnormal{if}&N_I(\omega_*)\ne0.
  \end{array}\right.
\end{equation}
where  $\omega_*$ is the smallest positive solution to the polynomial equation
\begin{equation}\label{eq:polycrtic}
  Q(\omega):=N_I(\omega)D_I(\omega)+N_R(\omega)D_R(\omega)=0
\end{equation}
where
\begin{equation}\label{eq:Hn}
\dfrac{C(sI-A)^{-1}B}{-CA^{-1}B}=:H_n(j\omega)=:\dfrac{N_R(\omega)+jN_I(\omega)}{D_R(\omega)+jD_I(\omega)}
\end{equation}
 where the real polynomials $N_R(\omega)$, $D_R(\omega)$, $N_I(\omega)$ and $D_I(\omega)$ are obvious from the definition above.

When the polynomial equation \eqref{eq:polycrtic} has no positive solution, then $\bar{\alpha}=\infty$.
\end{proposition}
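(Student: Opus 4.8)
What follows is a plan rather than a complete proof. The quantity $\bar\alpha$ in \eqref{eq:baralpha1} is the boundary of the stability region of the matrix $\tilde M_p$ (equivalently $M_p$) appearing in \eqref{eq:locstab1}. Since Proposition~\ref{prop:stab_positive_1} guarantees that $\tilde M_p$ is Hurwitz for all sufficiently small $\alpha>0$, and since the eigenvalues depend continuously on $\alpha$, stability can only be lost when a root of the characteristic polynomial $P(s,\alpha):=\det(sI-\tilde M_p)$ reaches the imaginary axis. The plan is therefore to compute $P(s,\alpha)$ explicitly, impose $P(j\omega,\alpha)=0$ for real $\omega$, and extract from the two resulting real equations both the admissible crossing frequencies (the roots of $Q$) and the associated value of $\alpha$.

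First I would compute $P(s,\alpha)$ by applying the Schur complement to the scalar $(2,2)$ block of $sI-\tilde M_p$ and then the rank-one matrix determinant lemma to the remaining $n\times n$ block. Writing $C(sI-A)^{-1}B=N(s)/D(s)$ with $D(s)=\det(sI-A)$, this yields $P(s,\alpha)=sD(s)+\alpha\mu\,\mathcal N(s)$, where $\mathcal N/\mathcal D=H_n$ is the normalized transfer function in \eqref{eq:Hn} and $\mathcal D=D$. Dividing by $\mathcal D(s)$ -- which is legitimate on the imaginary axis because $A$ is Hurwitz, so $\det(j\omega I-A)\ne0$ -- the characteristic equation takes the compact feedback form $s+\alpha\mu H_n(s)=0$. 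A quick check at $s=0$ gives $P(0,\alpha)=\alpha\mu\,\mathcal N(0)=\alpha\mu\,D(0)\ne0$ for $\alpha>0$, so no real eigenvalue can cross through the origin and every crossing must occur at some $\omega\ne0$.

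Next I would substitute $s=j\omega$ into $s+\alpha\mu H_n(s)=0$ and separate real and imaginary parts using the decomposition \eqref{eq:Hn}. This produces the pair of real equations $-\omega D_I(\omega)+\alpha\mu N_R(\omega)=0$ and $\omega D_R(\omega)+\alpha\mu N_I(\omega)=0$. Eliminating the common factor $\alpha\mu$ between them -- noting that at a genuine crossing at least one of $N_R,N_I$ is nonzero, since $N_R=N_I=0$ would force $H_n(j\omega)=0$ and hence $\omega=0$ -- gives exactly $N_I D_I+N_R D_R=0$, which is the defining equation \eqref{eq:polycrtic} for the crossing frequencies. Solving whichever of the two equations is nondegenerate for $\alpha$ then reproduces the two stated formulas: the first from the real part when $N_R(\omega_*)\ne0$, the second from the imaginary part when $N_I(\omega_*)\ne0$.

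The delicate step -- and the one I expect to be the main obstacle -- is identifying $\bar\alpha$ with the value attached to the \emph{smallest} positive root $\omega_*$ of $Q$. The formulas above associate a candidate $\alpha$ with every positive root of \eqref{eq:polycrtic}, whereas $\bar\alpha$ is by definition the smallest $\alpha>0$ at which stability is first lost as $\alpha$ increases from $0$. Turning the latter into the claim about the smallest $\omega_*$ requires a continuity/crossing argument: starting from the stable regime $\alpha\to0^+$ (where the perturbed eigenvalue sits near $-\alpha\mu$), one tracks the rightmost eigenvalue and argues that its first touching of the imaginary axis corresponds to the relevant root, together with a check that the candidate $\alpha$ it produces is positive. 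Finally, if \eqref{eq:polycrtic} has no positive root, then no eigenvalue can ever reach the imaginary axis for any $\alpha>0$; combined with stability as $\alpha\to0^+$ and the continuity of the spectrum, $\tilde M_p$ remains Hurwitz for all $\alpha>0$, giving $\bar\alpha=\infty$ as claimed.
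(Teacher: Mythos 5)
Your plan follows essentially the same route as the paper's proof: the Schur determinant formula reduces the characteristic equation to $P(s,\alpha)=s+\alpha\mu H_n(s)=0$ (legitimate since $A$ is Hurwitz), and the imaginary-axis crossing analysis --- separating real and imaginary parts, eliminating $\alpha\mu$ to obtain $Q(\omega)=0$, then solving either equation for $\alpha$ --- yields exactly the two stated formulas, with $\bar{\alpha}=\infty$ when no positive crossing frequency exists. The ``delicate step'' you flag (rigorously tying $\bar{\alpha}$ to the smallest positive root $\omega_*$ rather than to whichever root minimizes the candidate $\alpha$) is in fact glossed over by the paper's own proof as well, so your sketch is, if anything, slightly more careful on that point.
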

\begin{proof}
The matrix $M_p$ defined in \eqref{eq:locstab1} is Hurwitz stable if and only if
  \begin{equation}
    \det\begin{bmatrix}
    sI-A & -B\\
    -\dfrac{\alpha\mu C}{CA^{-1}B} & s
  \end{bmatrix}=0
  \end{equation}
  has its solutions in the open left-half plane. Applying then the Schur determinant formula yields
  \begin{equation}
    \det\begin{bmatrix}
    sI-A & -B\\
    -\dfrac{\alpha\mu C}{CA^{-1}B} & s
  \end{bmatrix}=\det(sI-A)\det\left(s-\dfrac{\alpha\mu C}{CA^{-1}B}(sI-A)^{-1}B\right).
  \end{equation}
  Since $A$ is Hurwitz, then the above polynomial is Hurwitz stable if and only if all the zeros of
   \begin{equation}\label{eq:P}
    P(s,\alpha):=s+\alpha\mu H_n(s)=0
  \end{equation}
  are located in the set $\{s\in\mathbb{C}:\ \Re[s]<0\}\times\mathbb{R}_{>0}$ where $H_n(s)$ is defined in \eqref{eq:Hn}. We now look for critical pairs $(\omega,\alpha)\in\mathbb{R}^2_{>0}$ such that $P(j\omega,\alpha)=0$.  Solving then for the problem $P(j\omega,\alpha)=0$, separating the real and imaginary parts and substituting for the value of $\alpha$ yield that there exists a critical pair $(\omega,\alpha)$ such that $P(j\omega,\alpha)=0$ if and only if there exists a positive solution $w_*$ to \eqref{eq:polycrtic}. When a such an $\omega^*>0$ is found, $\bar{\alpha}$ can be computing by solving for $\alpha$ in the real or the imaginary part equation of $P(j\omega,\alpha)=0$. When there is no such $w_*$, then $\alpha$ admits no upper-bound. The proof is complete.
\end{proof}

Interestingly there is a possibility of checking beforehand whether the polynomial \eqref{eq:polycrtic} has no positive root. This is stated in the following result:
\begin{proposition}\label{prop:arbitrary_alpha}
The following statements are equivalent:
\begin{enumerate}
  \item The function $C(sI-A)^{-1}B$ is weakly strictly positive real; i.e. $C(sI-A)^{-1}B$ has roots in the open left-half plane and $\Re[C(j\omega I-A)^{-1}B]>0$ for all $\omega\in\mathbb{R}$.
  \item The positive-equilibrium point  \eqref{eq:positive_eqpt} is locally exponentially stable for any $\mu,\alpha,k>0$.
\end{enumerate}
\end{proposition}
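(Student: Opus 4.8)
The plan is to reduce both statements to a single condition on the polynomial $Q$ of \eqref{eq:polycrtic} and then match signs. By Proposition \ref{prop:stab_positive_1} the positive equilibrium \eqref{eq:positive_eqpt} is locally exponentially stable exactly on $\alpha\in(0,\bar\alpha)$, so Statement~2 (stability for \emph{every} $\mu,\alpha,k>0$) is equivalent to $\bar\alpha=\infty$ for every $\mu>0$. Since $H_n$, and hence the real polynomials $N_R,N_I,D_R,D_I$ and $Q$, do not depend on $\mu$ (the reference enters $\bar\alpha$ only through the prefactor $1/\mu$ in Proposition \ref{prop:alpha_bar}), this holds for one $\mu$ iff for all. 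By the last sentence of Proposition \ref{prop:alpha_bar}, $\bar\alpha=\infty$ is in turn equivalent to $Q(\omega)=0$ having no positive solution. Thus Statement~2 is equivalent to $Q(\omega)\ne0$ for all $\omega>0$.

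The second ingredient is the elementary identity linking $Q$ to the frequency response. Writing $H_n(j\omega)$ as in \eqref{eq:Hn} and rationalizing, I would record
\begin{equation*}
  \Re\big[H_n(j\omega)\big]=\dfrac{N_R(\omega)D_R(\omega)+N_I(\omega)D_I(\omega)}{D_R(\omega)^2+D_I(\omega)^2}=\dfrac{Q(\omega)}{D_R(\omega)^2+D_I(\omega)^2}.
\end{equation*}
Because $A$ is Hurwitz, $\det(j\omega I-A)\ne0$ for all real $\omega$, so the denominator never vanishes and $\Re[H_n(j\omega)]$ has exactly the sign of $Q(\omega)$. Moreover, from \eqref{eq:Hn}, $H_n(s)=G(s)/G(0)$ with $G(s):=C(sI-A)^{-1}B$ and $G(0)=-CA^{-1}B$. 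Under the standing positivity assumptions ($A$ Metzler and Hurwitz, $B,C\ge0$) one has $-A^{-1}=\int_0^\infty e^{At}\,dt\ge0$, hence $G(0)=C(-A^{-1})B\ge0$, and output controllability ($CA^{-1}B\ne0$) makes it strictly positive. Since $G(0)>0$ is a real scalar, $\Re[H_n(j\omega)]>0$ iff $\Re[G(j\omega)]>0$, which is the positivity clause of Statement~1; the ``roots in the open left-half plane'' clause is precisely the Hurwitz assumption on $A$.

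With these in hand the two directions are short. For $(1)\Rightarrow(2)$: weak strict positive realness gives $\Re[G(j\omega)]>0$ for all $\omega$, hence $Q(\omega)>0$ for all $\omega$, so $Q$ has no positive root and Statement~2 follows. For $(2)\Rightarrow(1)$ I expect the only real subtlety: $Q(\omega)\ne0$ on $(0,\infty)$ forces $Q$ to have merely a \emph{constant} sign there, and I must rule out the negative one. The fix is to evaluate at $\omega=0$: since $H_n(0)=G(0)/G(0)=1$, the identity above gives $Q(0)=D_R(0)^2+D_I(0)^2>0$ (the denominator cannot vanish as $A$ is Hurwitz). By continuity $Q>0$ near $0$, and as $Q$ does not vanish on $(0,\infty)$ it stays positive there; evenness of $Q$ in $\omega$ extends this to all real $\omega$. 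Then $\Re[H_n(j\omega)]>0$, hence $\Re[G(j\omega)]>0$, yielding Statement~1. The main obstacle is therefore this sign-fixing step, which the computation $Q(0)>0$ settles cleanly.
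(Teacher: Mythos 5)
Your proposal is correct (at the same level of rigor as the rest of the paper) but it follows a genuinely different route from the paper's own proof. The paper argues in polar form: recalling from the proof of Proposition~\ref{prop:alpha_bar} that a stability crossing $P(j\omega,\alpha)=0$ with $(\omega,\alpha)\in\mathbb{R}_{>0}^2$ splits into a magnitude condition $\omega=\alpha\mu|H_n(j\omega)|$ and a phase condition $\arg H_n(j\omega)=-\pi/2$, it notes that the magnitude condition can always be met by tuning $\alpha$, and then simply identifies ``the phase of $H_n(j\omega)$ is bounded away from $-\pi/2$ for all $\omega\in\mathbb{R}$'' with weak strict positive realness. You instead work in Cartesian form, through the identity $\Re[H_n(j\omega)]=Q(\omega)/\bigl(D_R(\omega)^2+D_I(\omega)^2\bigr)$, which the paper never states. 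Your version buys two things the paper glosses over: the sign-fixing step (the passage from ``$\Re[H_n(j\omega)]$ never vanishes for $\omega>0$'' to ``$\Re[H_n(j\omega)]>0$ everywhere''), which you settle via $Q(0)=D_R(0)^2+D_I(0)^2>0$, the intermediate value theorem and evenness of $Q$; and the justification that $G(0)=-CA^{-1}B>0$ from the Metzler/nonnegativity structure, which the paper uses implicitly. What the paper's polar form buys is a sharper encoding of the crossing condition: $\arg H_n(j\omega)=-\pi/2$ records both $\Re[H_n(j\omega)]=0$ and $\Im[H_n(j\omega)]<0$, whereas a positive root of $Q$ records only the former. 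Consequently your step ``Statement~2 $\Rightarrow Q$ has no positive root'' leans on the part of Proposition~\ref{prop:alpha_bar} asserting that the smallest positive root of $Q$ produces a finite positive $\bar\alpha$; a root $\omega_*$ with $\Im[H_n(j\omega_*)]>0$ (a crossing at negative gain), or with $N_R(\omega_*)=N_I(\omega_*)=0$, is not covered by that formula. This is not a gap you introduced --- the paper's Proposition~\ref{prop:alpha_bar}, and its own proof here (in the jump from $\omega\in\mathbb{R}_{>0}$ to ``for all $\omega\in\mathbb{R}$''), pass over the same case --- but both arguments stand or fall together on that point.
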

\begin{proof}
Note first that $P(j\omega,\alpha)=0$ for some $(\omega,\alpha)\in\mathbb{R}_{>0}^2$ if and only if
\begin{equation}\label{eq:mag}
  \omega = \alpha\mu |H_n(j\omega)|
\end{equation}
and
\begin{equation}\label{eq:phase}
  -\dfrac{\pi}{2} = \arg(H_n(j\omega)).
\end{equation}
Clearly, if there is an $\omega$ such that the equality \eqref{eq:phase} holds, then \eqref{eq:mag} can be automatically satisfied by appropriately choosing $\alpha$. Such an $\omega$ cannot be 0 since $H(0)\ne0$ and cannot be $\infty$ since $H(\infty)=0$. Therefore, the nonzero equilibrium point is locally exponentially stable for any $\mu,\alpha,k>0$ if and only if the phase of $H(j\omega)$ is bounded away from $-\pi/2$ for all $\omega\in\mathbb{R}$. Which is the definition of a weakly strictly positive real function. The proof is complete.
\end{proof}

\begin{remark}
Establishing whether the transfer function $C(sI-A)^{-1}B$ is weakly strictly positive real is not easy in the general case. However, the transfer function can be proved to strictly positive real (which is slightly stronger) by checking the existence of symmetric positive definite matrices $P,Q\in\mathbb{R}^{d\times d}$ such that we have
\begin{equation}
    A^TP+PA=-Q\ \textnormal{and}\ PB=C^T.
\end{equation}
\end{remark}

\begin{example}
We now give some examples:
\begin{enumerate}
    \item The function $\dfrac{1}{(s+\alpha)}$, $\alpha>0$, is strictly positive real (it is hence also weakly strictly positive real).
    \item The function $\dfrac{1}{(s+\alpha)(s+\beta)}$, $\alpha,\beta>0$, is not positive real.
    \item The function $\dfrac{s+\alpha+\beta}{(s+\alpha)(s+\beta)}$, $\alpha,\beta>0$, is weakly strictly positive real but not strictly positive real.
\end{enumerate}
\end{example}

\textbf{Choosing the best $\boldsymbol{\alpha}$.} How $\alpha$ should be chosen? A relevant measure is that of \emph{spectral abscissa} defined for a square matrix $M$ as
\begin{equation}
  \chi(M):=\max\{\Re[\theta]:\ \theta\in\lambda(M)\}
\end{equation}
where $\lambda(M)$ denotes the spectrum of the matrix $M$. The spectral abscissa measures the rate of convergence of the solutions to the equilibrium point for trajectories that are sufficiently close to the equilibrium point. Based on this, $\alpha$ should be chosen as the value in $(0,\bar{\alpha})$ such that
\begin{equation}
\chi\left(\begin{bmatrix}
  A & B\\
  \dfrac{\alpha\mu}{CA^{-1}B}C & 0
\end{bmatrix}\right)
\end{equation}
is minimum. To compute this value, several methods can be applied. A brute force method consists of performing a gridding of the interval $(0,\bar{\alpha})$ and evaluating the spectral abscissa at each point. Then $\alpha$ can be picked as the one that minimizes the spectral abscissa over the grid. Methods based on nonsmooth optimization may also be used; see e.g. \cite{Burke:06b,Vanbiervliet:09}.

\subsubsection{Convergence of the time-averages}\label{sec:SM:time_averages}

It seems interesting to study the case where the system admits an attractive oscillatory solution (limit cycle) or non-converging bounded trajectories. In such a case, it is clear that the control problem is not solved since the unique positive steady-state value is not asymptotically stable and, therefore, set-point tracking is not achieved. It seems then natural to relax the control problem into the following one:
\begin{problem}
  Let $\mu>0$. Find a controller such that
  \begin{enumerate}
   \item the control input $u$ is nonnegative;
    \item the closed-loop system has bounded trajectories (at least for some set of initial conditions)
    \item the time-average of the output $y$ tracks the reference $\mu>0$; i.e.
    \begin{equation}
          \lim_{t\to\infty}\dfrac{1}{t}\int_0^ty(s)ds=\mu
    \end{equation}
    \item Constant disturbances acting on the input, on the state of the system and (possibly) on the output are rejected (at the time-average level).
    \end{enumerate}
\end{problem}

We then have the following proposition:
\begin{proposition}\label{prop:time_averages}
  Assume that $\mu$ is positive and that the solution of the system does not increase without bound and there exist positive constants $a<b$ such that $a<x_i(t)<b$ and $a<v(t)<b$, $i=1,\ldots,n$, for all $t\ge0$, then
  \begin{equation}
    \lim_{t\to\infty}\dfrac{1}{t}\int_0^tx(s)ds=x^*\ \textnormal{and}\ \lim_{t\to\infty}\dfrac{1}{t}\int_0^tv(s)ds=v^*.
  \end{equation}
%
%
\end{proposition}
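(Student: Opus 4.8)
The plan is to exploit the multiplicative structure of the integrator equation $\dot{v}(t)=\alpha v(t)(\mu-y(t))$. Since the hypothesis guarantees $v(t)>a>0$ for all $t$, I can divide by $v(t)$ and recognize the left-hand side as a logarithmic derivative, giving $\tfrac{d}{dt}\ln v(t)=\alpha(\mu-y(t))$. Integrating from $0$ to $t$ and dividing by $t$ yields
\begin{equation*}
\frac{\ln v(t)-\ln v(0)}{t}=\alpha\left(\mu-\frac{1}{t}\int_0^t y(s)\,ds\right).
\end{equation*}
Because $a<v(t)<b$, the quantity $\ln v(t)$ stays bounded, so the left-hand side tends to $0$ as $t\to\infty$. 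Hence the time-average of the output converges, $\tfrac{1}{t}\int_0^t y(s)\,ds\to\mu$, and since $y=Cx$ this reads $C\cdot\tfrac{1}{t}\int_0^t x(s)\,ds\to\mu$. This step is the crux of the argument: the logarithmic transformation is what converts the nonlinear integral action into a clean statement about the time-average of the output.

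The remaining steps are linear. Integrating the state equation $\dot{x}(t)=Ax(t)+Bkv(t)$ over $[0,t]$ and dividing by $t$ gives
\begin{equation*}
\frac{x(t)-x(0)}{t}=A\,\frac{1}{t}\int_0^t x(s)\,ds+Bk\,\frac{1}{t}\int_0^t v(s)\,ds.
\end{equation*}
The boundedness of $x(t)$ forces the left-hand side to $0$, so writing $\bar{x}_t:=\tfrac{1}{t}\int_0^t x(s)\,ds$ and $\bar{v}_t:=\tfrac{1}{t}\int_0^t v(s)\,ds$, I obtain the two limiting relations $A\bar{x}_t+Bk\bar{v}_t\to0$ and $C\bar{x}_t\to\mu$.

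Finally, I would close the argument by comparing these against the equilibrium identities $Ax^*+Bkv^*=0$ and $Cx^*=\mu$, which hold for the positive equilibrium \eqref{eq:positive_eqpt}. Setting $e_x:=\bar{x}_t-x^*$ and $e_v:=\bar{v}_t-v^*$, the relations become $Ae_x+Bke_v\to0$ and $Ce_x\to0$. Solving the first for $e_x=-A^{-1}Bk\,e_v+A^{-1}\delta_t$ with $\delta_t:=A\bar{x}_t+Bk\bar{v}_t\to0$ and substituting into the second gives $-CA^{-1}Bk\,e_v\to0$; since Assumption~2 guarantees $CA^{-1}B\ne0$ and $k>0$, the scalar $CA^{-1}Bk$ is nonzero, so $e_v\to0$, and back-substitution yields $e_x\to0$. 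This establishes $\bar{x}_t\to x^*$ and $\bar{v}_t\to v^*$. The only genuine subtlety is the first step; once the output average is pinned to $\mu$, the output-controllability condition does all the remaining work.
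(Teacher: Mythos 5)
Your proof is correct, and its first half---integrating $\dot x = Ax+Bkv$ and $\tfrac{d}{dt}\ln v = \alpha(\mu - Cx)$ over $[0,t]$, dividing by $t$, and using the bounds $a<v(t)<b$ and $x(t)<b$ to force the left-hand sides to zero---coincides exactly with the paper's. Where you part ways is the concluding step. The paper invokes compactness: the bounded averages $(\bar x(t_k),\bar v(t_k))$ admit convergent subsequences, any subsequential limit satisfies $0=A\bar x^*+Bk\bar v^*$ and $C\bar x^*=\mu$, and since its entries exceed $a>0$ it is identified with the unique positive equilibrium $(x^*,v^*)$. You instead solve the limiting relations directly: setting $e_x=\bar x_t-x^*$, $e_v=\bar v_t-v^*$ and using that $A$ is invertible (Hurwitz, Assumption 1) and that $CA^{-1}Bk\ne 0$ (Assumption 2, $k>0$), you conclude $e_v\to0$ and then $e_x\to0$. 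Your route is more elementary and in one respect tighter: the paper's proof, as written, establishes convergence only along a subsequence and leaves implicit the standard ``every subsequential limit coincides, hence the full limit exists'' step, which your direct inversion renders unnecessary. It also makes explicit where output controllability enters (in the paper it is hidden inside the uniqueness of the positive equilibrium), and it never uses the lower bound $a<x_i(t)$, only the upper bound on $x$ and the two-sided bound on $v$. What the paper's compactness argument buys in exchange is robustness: it transfers essentially unchanged to nonlinear networks, where the limiting relations cannot be inverted in closed form and one can only appeal to uniqueness of the equilibrium in the region where the averages are confined.
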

\begin{proof}
  Since $\mu>0$, then $(x^*,v^*)$ is the only equilibrium in the interior of the positive orthant. Note also that $v(0)>0$, then $v(t)>0$ for all $t\ge0$. We then have that
  \begin{equation}
    \dfrac{x(t)-x(0)}{t}=\dfrac{1}{t}\int_0^t[Ax(s)+Bkv(s)]ds
  \end{equation}
  and
  \begin{equation}
    \dfrac{\log(v(t))-\log(v(0))}{t}=\dfrac{1}{t}\int_0^t[\alpha(\mu-Cx(s))]ds.
  \end{equation}
  Letting
\begin{equation}
  \bar{x}(t):=\dfrac{1}{t}\int_0^tx(s)ds\ \textnormal{and}\   \bar{v}(t):=\dfrac{1}{t}\int_0^tv(s)ds
\end{equation}
yields the expressions
  \begin{equation}\label{eq:dksldsldq111}
    \dfrac{x(t)-x(0)}{t}=A\bar{x}(t)+Bk\bar{v}(t)\ \textnormal{and}\   \dfrac{\log(v(t))-\log(x(0))}{t}=\alpha(\mu-C\bar{x}(t)).
  \end{equation}
Since $a<\bar{x}_i(t)<b$, $i=1,\ldots,d$, and $a<\bar{v}(t)<b$ for all $t>0$, then for any sequence $\{t_k\}$ such that ${t_k\to\infty}$ as $k\to\infty$, and for each $i=1,\ldots,d$, the sequence $\{\bar{x}_i(t_k)\}$ admits a convergent subsequence. The same holds for the sequence $\{\bar{v}(t_k)\}$. Hence, we can define a subsequence $\{t_{\sigma(k)}\}$ such that $(\bar{x}(t_{\sigma(k)}),\bar{v}(t_{\sigma(k)}))$ converges to some limit $(\bar{x}^*,\bar{v}^*)$. Using now the fact that sequences $x_i(t_{\sigma(k)})-\bar{x}_i(0)$ and $\log(v(t_{\sigma(k)}))-\log(v(0))$ are bounded, we can take the limit in \eqref{eq:dksldsldq111} to get
\begin{equation}
  0=A\bar{x}^*+Bk\bar{v}^*\ \textnormal{and}\   0=\alpha(\mu-C\bar{x}^*).
\end{equation}
Then then implies that the point $(\bar{x}^*,\bar{v}^*)$ is an equilibrium point of the system and since $\bar{x}_1^*,\ldots,\bar{x}^*_d,\bar{v}^*>a$, then it belongs to the interior of the positive orthant meaning that $\bar{x}^*=x^*$ and $\bar{v}^*=v^*$. The proof is complete.
\end{proof}

The above notably ensures that even if the trajectories are periodic (e.g. follow a limit cycle), then the control problem is still solved, but a weaker sense.

\subsection{Disturbance rejection and perfect adaptation properties}\label{sec:SM:perfect_adaptation}

Let us consider now the following disturbed system
\begin{equation}\label{eq:mainsystLdist}
  \begin{array}{lcl}
    \dot{x}(t)&=&Ax(t)+Bu(t)+Ed\\
    y(t)&=&Cx(t)\\
    x(0)&=&x_0
  \end{array}
\end{equation}
where $x\in\mathbb{R}^n$ is the state of the system, $u\in\mathbb{R}$ is the control input, $d\in\mathbb{R}_{\ge0}^p$ is the constant input disturbances and $y\in\mathbb{R}$ is the measured output that has to be controlled. As before, we assume that the matrix $A$ is Metzler and Hurwitz stable, and that the matrices $B,C,E$ are nonnegative. Therefore, the system is internally positive, which means that for any $x_0\ge0$, any $d\ge0$ and any $u(t)\ge0$, we have that $x(t)\ge0$ and $y(t)\ge0$ for all $t\ge0$. Alternatively, this model can be viewed as a deterministic reaction network involving zeroth and first order reactions with the matrix $A=S\Lambda$ and $Ed=S\lambda_0$ where $d$ is defined as the vector that contains the distinct nonzero entries of $\lambda_0$. As before, the matrix $B$ can be chosen so that $u$ is the birth rate of some species whereas $C$ is simply chosen so as to measure one particular species; see e.g. \cite{Briat:15e} for a similar setup in the stochastic setting.

It is convenient here to introduce here the following set of admissible constant input disturbances given by
\begin{equation}\label{eq:calD}
  \mathcal{D}_\mu:=\left\{d\in\mathbb{R}^{p}_{\ge0}:\ \mu+CA^{-1}Ed>0\right\}.
\end{equation}
Basically, this set contains the admissible disturbance amplitudes for which the output can track the reference $\mu$. Indeed, if the disturbance is such that the output of the system for $u=0$ is larger than the reference, then the reference cannot be tracked (because this would require a negative input). Note that if $-CA^{-1}E=0$, i.e. the disturbances do impact the output, then the disturbances can then be arbitrarily large and $\mathcal{D}_\mu=\mathbb{R}^{p}_{\ge0}$.

We now analyze the stability of the equilibrium points when disturbances are present. Let us start with the perturbed zero-equilibrium point and prove that it is structurally unstable, as in the disturbance-free case.
\begin{proposition}\label{prop:dist_zero}
  Assume that $d\in\mathcal{D}_\mu$, then the perturbed zero-equilibrium point
  \begin{equation}
    \left(x^*,v^*\right)=\left(-A^{-1}Ed,0\right)
  \end{equation}
  is unstable for all $\alpha,k,\mu>0$.
\end{proposition}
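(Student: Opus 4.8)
The plan is to mirror the proof of Proposition \ref{prop:stab_zero_1} by linearizing the disturbed closed-loop system about the perturbed zero-equilibrium and exhibiting a strictly positive eigenvalue. First I would write the closed-loop dynamics obtained by substituting $u=kv$ into \eqref{eq:mainsystLdist}, namely $\dot{x}=Ax+Bkv+Ed$ and $\dot{v}=\alpha v(\mu-Cx)$, and confirm that $(x^*,v^*)=(-A^{-1}Ed,0)$ is indeed an equilibrium: the $v$-equation vanishes identically when $v=0$, and the $x$-equation gives $Ax^*+Ed=0$, i.e. $x^*=-A^{-1}Ed$.

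Next I would compute the Jacobian at $(x^*,0)$. Setting $\tilde{x}=x-x^*$ and $\tilde{v}=v$, the $x$-equation linearizes to $\dot{\tilde{x}}=A\tilde{x}+Bk\tilde{v}$ (the constant $Ax^*+Ed$ cancels). For the $v$-equation, writing $g(x,v)=\alpha v(\mu-Cx)$, the two partial derivatives at $(x^*,0)$ are $\partial_v g=\alpha(\mu-Cx^*)$ and $\partial_x g=-\alpha vC=0$ (since $v^*=0$). Because $Cx^*=-CA^{-1}Ed$, we get $\partial_v g=\alpha(\mu+CA^{-1}Ed)$. The linearized matrix is therefore block upper-triangular,
\begin{equation}
M_0^d=\begin{bmatrix} A & Bk \\ 0 & \alpha(\mu+CA^{-1}Ed) \end{bmatrix},
\end{equation}
whose spectrum is $\lambda(A)\cup\{\alpha(\mu+CA^{-1}Ed)\}$.

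The instability then follows directly from the definition of $\mathcal{D}_\mu$ in \eqref{eq:calD}: the hypothesis $d\in\mathcal{D}_\mu$ is exactly the condition $\mu+CA^{-1}Ed>0$, so the scalar eigenvalue $\alpha(\mu+CA^{-1}Ed)$ is strictly positive for every $\alpha,\mu,k>0$. Hence $M_0^d$ has an eigenvalue in the open right-half plane, so the linearization is unstable and, by Lyapunov's indirect method, the perturbed zero-equilibrium is unstable.

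The argument is essentially routine; the only point worth flagging is verifying that the off-diagonal term $\partial_x g$ of the $v$-equation vanishes, which is what preserves the block-triangular structure and lets the critical eigenvalue be read off directly. This vanishing is a direct consequence of $v^*=0$, and it is the same mechanism that made Proposition \ref{prop:stab_zero_1} clean. The role of the admissibility set $\mathcal{D}_\mu$ is precisely to fix the sign of this eigenvalue; outside $\mathcal{D}_\mu$ the argument would break down, consistent with the interpretation of $\mathcal{D}_\mu$ as the set of disturbances for which tracking of $\mu$ is feasible.
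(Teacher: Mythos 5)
Your proof is correct and follows essentially the same route as the paper: the paper's own proof is just the one-line remark that the argument is identical to that of Proposition \ref{prop:stab_zero_1}, i.e.\ linearization yielding a block upper-triangular Jacobian with scalar eigenvalue $\alpha(\mu+CA^{-1}Ed)$. Your writeup simply carries out the details the paper elides, and correctly pinpoints the one place where the disturbed case differs from the undisturbed one, namely that the hypothesis $d\in\mathcal{D}_\mu$ is exactly what guarantees this eigenvalue is strictly positive.
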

\begin{proof}
The proof is identical to the one of Proposition \ref{prop:stab_zero_1}.
\end{proof}

We now consider the perturbed positive-equilibrium point:
\begin{proposition}\label{prop:dist_positive}
  Let $\mu>0$ be given and assume that $d\in\mathcal{D}_\mu$, then there exists a $\bar{\alpha}_d=\bar{\alpha}_d(\mu)>0$ such that the perturbed positive-equilibrium point
  \begin{equation}
(x^*,v^*)=\left(A^{-1}\left(\dfrac{B(\mu+CA^{-1}Ed)}{CA^{-1}B}-Ed\right),-\dfrac{\mu+CA^{-1}Ed}{CA^{-1}Bk}\right)
  \end{equation}
  is locally exponentially stable for all $k>0$ and all $\alpha\in(0,\bar{\alpha}_d)$ where
  \begin{equation}
     \bar{\alpha}_d=\dfrac{\mu}{\mu+CA^{-1}Ed}\bar{\alpha}\ge\bar{\alpha}
  \end{equation}
  where $\bar{\alpha}$ is defined in Proposition \ref{prop:alpha_bar}.
\end{proposition}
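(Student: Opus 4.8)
The plan is to mirror the proof of Proposition \ref{prop:stab_positive_1}, exploiting the fact that the constant disturbance $Ed$ merely shifts the equilibrium and rescales the effective value of the stability coefficient. First I would confirm that the stated pair $(x^*,v^*)$ is the positive equilibrium of the disturbed closed-loop system $\dot{x}=Ax+Bkv+Ed$, $\dot{v}=\alpha v(\mu-Cx)$: setting $\dot{v}=0$ with $v\neq0$ forces $Cx^*=\mu$, and then $\dot{x}=0$ gives $x^*=-A^{-1}Bkv^*-A^{-1}Ed$; applying $C$ and solving for $v^*$ yields $v^*=-(\mu+CA^{-1}Ed)/(CA^{-1}Bk)$, from which the stated expression for $x^*$ follows by back-substitution. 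The hypothesis $d\in\mathcal{D}_\mu$ (see \eqref{eq:calD}) guarantees $\mu+CA^{-1}Ed>0$, which is exactly what places this equilibrium in the interior of the positive orthant, since $CA^{-1}B<0$ (because $-A^{-1}\ge0$ as $A$ is Metzler and Hurwitz, while $B,C\ge0$, and $CA^{-1}B\neq0$ by output controllability).

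Next I would linearize about $(x^*,v^*)$. Writing $\tilde{x}=x-x^*$ and $\tilde{v}=v-v^*$, the constant term $Ed$ drops out of the Jacobian; moreover $\partial_v[\alpha v(\mu-Cx)]=\alpha(\mu-Cx)$ vanishes at equilibrium because $Cx^*=\mu$, and $\partial_x[\alpha v(\mu-Cx)]=-\alpha vC$ evaluates to $-\alpha v^*C$. Using $-\alpha v^*=\alpha(\mu+CA^{-1}Ed)/(CA^{-1}Bk)$, this gives the Jacobian
\[
M_{p,d}=\begin{bmatrix} A & Bk\\ \dfrac{\alpha(\mu+CA^{-1}Ed)}{CA^{-1}Bk}C & 0\end{bmatrix}.
\]
The key observation is that $M_{p,d}$ has exactly the structure of the matrix $M_p$ in \eqref{eq:locstab1}, with $\mu$ replaced in the lower-left block by $\mu+CA^{-1}Ed$. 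Performing the same $\diag(I_n,k)$ similarity transformation used in the proof of Proposition \ref{prop:stab_positive_1} eliminates the gain $k$ and shows that the eigenvalues of $M_{p,d}$ coincide with those of the matrix appearing in the definition \eqref{eq:baralpha1} of $\bar{\alpha}$, evaluated at the effective coefficient $\nu=\alpha(\mu+CA^{-1}Ed)/\mu$.

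The conclusion is then immediate. By the characterization \eqref{eq:baralpha1}, that matrix is Hurwitz stable precisely when $\nu\in(0,\bar{\alpha})$, i.e. when $\alpha(\mu+CA^{-1}Ed)/\mu<\bar{\alpha}$, which rearranges to $\alpha<\mu\bar{\alpha}/(\mu+CA^{-1}Ed)=\bar{\alpha}_d$; independence from $k>0$ is inherited from the similarity transformation. Finally, the comparison $\bar{\alpha}_d\ge\bar{\alpha}$ follows from positivity of the network: since $-A^{-1}\ge0$ and $C,E,d\ge0$ we have $CA^{-1}Ed\le0$, hence $\mu+CA^{-1}Ed\le\mu$ and therefore $\mu/(\mu+CA^{-1}Ed)\ge1$.

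I do not expect a serious obstacle here, since the entire content of the result is the reparametrization $\alpha\mapsto\alpha(\mu+CA^{-1}Ed)/\mu$ that absorbs the disturbance into an effective stability coefficient. The only point that genuinely requires care is the sign analysis of $CA^{-1}Ed$ through the Metzler--Hurwitz structure, which is what justifies both the positivity of the equilibrium (via $d\in\mathcal{D}_\mu$, ensuring $\nu>0$ so that Proposition \ref{prop:stab_positive_1} applies) and the final comparison $\bar{\alpha}_d\ge\bar{\alpha}$; everything else is a direct transcription of the disturbance-free argument.
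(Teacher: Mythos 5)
Your proof is correct and follows essentially the same route as the paper's: both linearize about the perturbed equilibrium, observe that the disturbance enters the Jacobian only through the product $\alpha(\mu+CA^{-1}Ed)$ in the lower-left block, and conclude that the admissible range for $\alpha$ is rescaled by $\mu/(\mu+CA^{-1}Ed)$, with the sign analysis $CA^{-1}Ed\le0$ (Metzler--Hurwitz structure) giving $\bar{\alpha}_d\ge\bar{\alpha}$. The only cosmetic difference is that you identify the $k$-eliminated Jacobian directly with the matrix in \eqref{eq:baralpha1} at the effective coefficient $\nu=\alpha(\mu+CA^{-1}Ed)/\mu$, whereas the paper passes through the characteristic polynomial $P(s,\alpha)=s+\alpha(\mu+CA^{-1}Ed)H_n(s)$ from Proposition \ref{prop:alpha_bar}; these are equivalent via the Schur determinant reduction already used there.
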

\begin{proof}
 As we have seen before, the equilibrium value $x^*$ of the state does not change the expression of the linearized system. Only the equilibrium value $v^*$ of the state of the nonlinear integrator matters. Noting that $v^*(d)=v^*(0)+\Delta_vd$ where
 \begin{equation}
  \begin{array}{rclcrcl}
    v^*(0)&=&\dfrac{-\mu}{CA^{-1}Bk}>0&\textnormal{and}&\Delta_v&=&-\dfrac{CA^{-1}E}{CA^{-1}Bk}\le0
  \end{array}
\end{equation}
allows us to conclude that $v^*(d)\le v^*(0)$ for all $d\in\mathcal{D}_\mu$. Interestingly, the local linear system becomes in this case
\begin{equation}
\begin{bmatrix}
  \dot{\tilde{x}}(t)\\
  \dot{\tilde{v}}(t)
\end{bmatrix}  =\begin{bmatrix}
    A & Bk\\
    -\alpha v^*(d)C & 0
  \end{bmatrix}\begin{bmatrix}
  \tilde{x}(t)\\
  \tilde{v}(t)
\end{bmatrix}
\end{equation}
and assuming that the system is not stable for an arbitrarily large $\alpha$ (see Proposition \ref{prop:arbitrary_alpha}), then the above system is asymptotically stable if and only if the zeros of polynomial
   \begin{equation}\label{eq:P2}
    P(s,\alpha)=s+\alpha (\mu+CA^{-1}Ed)H_n(s)
  \end{equation}
  are located in the set $\{s\in\mathbb{C}:\ \Re[s]<0\}\times\mathbb{R}_{>0}$ where $H_n(s)$ is defined in \eqref{eq:Hn}. If we, therefore, let $\bar\alpha$ to be the maximal admissible value for $\alpha$ when $d=0$ and $\bar\alpha_d$ when the value is $d\ne0$, then we have that
\begin{equation}
  \bar{\alpha}_d:=\dfrac{\mu}{\mu+CA^{-1}Ed}\bar{\alpha}\ge\bar{\alpha}.
\end{equation}
Therefore, if $\alpha\in(0,\bar{\alpha})$, then the system is locally exponentially stable for any $d\in\mathcal{D}_\mu$. The proof is complete.
\end{proof}

\begin{remark}
Quite interestingly, the function $v^*(d)$ is a decreasing function of the disturbance level $d$. The disturbance has then the effect of enlarging the admissible region for the parameter $\alpha$; i.e. it increases $\bar\alpha$.
\end{remark}


%
%


\subsection{Robustness properties}

Closed-loop asymptotic stability and set-point tracking is a robust property meaning that it will preserved under sufficiently small perturbations of the parameters of the reaction network and the controller. In this regard, when reaction rates change value sporadically enough and stay in the stability region, then the system will perfectly adapt to these changes.

\subsection{Metabolic cost and power consumption}\label{sec:SM:metabolic_cost}

An interesting way to evaluate the metabolic cost induced by the introduction of the additional reactions is to evaluate the long-run time-averages of the propensity functions associated with the controller reactions. Clearly, three reactions are involved, namely, the reference reaction, the measurement reaction and the actuation reaction. to which we associate the unitary metabolic costs $\kappa_r$, $\kappa_m$ and $\kappa_a$, respectively. The total energy consumed until time $t$ is given by
\begin{equation}
  E(t)=\kappa_r\int_0^t\alpha\mu v(s)ds+\kappa_m\int_0^t\alpha v(s)y(s)ds+\kappa_a\int_0^tk v(s)ds.
\end{equation}
This quantity grows without bound irrespectively of the stability of the process. To circumvent this problem, we will consider the instantaneous power defined as:
\begin{equation}
  P(t)=\dfrac{dE(t)}{dt}
\end{equation}
and, more especially, its stationary value $P^*=\lim_{t\to\infty}P(t)$ when it exists. We then have the following result:
\begin{proposition}
Assume that the positive equilibrium point of the closed-loop network is asymptotically stable. Then, the stationary power consumption of the controller network is given by
\begin{equation}
  P^*=\dfrac{\mu}{-CA^{-1}B}\left(\dfrac{\alpha\mu(\kappa_r+\kappa_m)}{k}+\kappa_a\right).
\end{equation}
\end{proposition}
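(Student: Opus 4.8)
The plan is to compute the instantaneous power $P(t)$ in closed form by differentiating the energy $E(t)$, and then to pass to the limit $t\to\infty$ using the assumed asymptotic stability of the positive equilibrium point together with the set-point tracking property already established for it.

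First I would differentiate $E(t)$ term by term. Each summand is an integral of the form $\int_0^t\phi(s)\,ds$, so the fundamental theorem of calculus gives directly
\[
  P(t)=\dfrac{dE(t)}{dt}=\kappa_r\,\alpha\mu\,v(t)+\kappa_m\,\alpha\,v(t)y(t)+\kappa_a\,k\,v(t),
\]
which is a continuous (indeed polynomial) function of the instantaneous states $v(t)$ and $y(t)=Cx(t)$. Next I would invoke the stability hypothesis: since the positive equilibrium point \eqref{eq:positive_eqpt} is locally asymptotically stable, any trajectory starting sufficiently close satisfies $(x(t),v(t))\to(x^*,v^*)$ as $t\to\infty$, whence $v(t)\to v^*$ and $y(t)=Cx(t)\to Cx^*=\mu$, the last equality being the tracking property $y^*=\mu$ of the positive equilibrium. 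By continuity of $P$ in $(v,y)$ the stationary power exists and equals $P$ evaluated at equilibrium,
\[
  P^*=\lim_{t\to\infty}P(t)=\kappa_r\,\alpha\mu\,v^*+\kappa_m\,\alpha\,v^*\mu+\kappa_a\,k\,v^*=\alpha\mu(\kappa_r+\kappa_m)v^*+\kappa_a k\,v^*.
\]

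Finally I would substitute the equilibrium value $v^*=-\mu/(CA^{-1}Bk)=\mu/(-CA^{-1}B\,k)$ from \eqref{eq:positive_eqpt} and factor out $\mu/(-CA^{-1}B)$, which yields
\[
  P^*=\dfrac{\alpha\mu^2(\kappa_r+\kappa_m)}{-CA^{-1}B\,k}+\dfrac{\kappa_a\mu}{-CA^{-1}B}=\dfrac{\mu}{-CA^{-1}B}\left(\dfrac{\alpha\mu(\kappa_r+\kappa_m)}{k}+\kappa_a\right),
\]
the claimed formula. The argument is essentially elementary once $P(t)$ is written explicitly; the only step deserving a word of care — and it is minor — is the interchange $\lim_{t\to\infty}P(t)=P\big(\lim_{t\to\infty}(v(t),y(t))\big)$, which is immediate from continuity of the polynomial $P$ and the convergence guaranteed by the asymptotic stability assumption. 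I therefore do not anticipate any genuine analytic obstacle; the substance of the result lies entirely in the closed-form equilibrium value $v^*$ supplied by the earlier proposition.
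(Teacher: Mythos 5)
Your proof is correct and matches the paper's approach: the paper's entire proof reads ``The proof follows from simple substitutions,'' and your argument simply makes those substitutions explicit --- differentiating $E(t)$, passing to the limit via local asymptotic stability and the tracking property $y^*=\mu$, and inserting $v^*=-\mu/(CA^{-1}Bk)$ from the positive-equilibrium formula. Nothing is missing; your care about the continuity justification for interchanging the limit is a welcome (if minor) addition to what the paper leaves implicit.
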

\begin{proof}
The proof follows from simple substitutions.
\end{proof}

Very interestingly, we can see that by increasing the gain $k>0$, we can reduce the equilibrium power consumption without risking deteriorating the stability of the system. Also note we have the immediate lower-bound $\mu\kappa_a$ on the equilibrium power consumption which corresponds to the power consumption of the actuation reaction.

\begin{remark}
Note that when the closed-loop network is not asymptotically stable, then $P^*$ may not exist. To overcome this situation, the idea would to consider the average value of $P$ given by
\begin{equation}
  \lim_{t\to\infty}\dfrac{1}{t}\int_0^tP(s)ds
\end{equation}
when the limit exists. This can be proved using similar arguments as for the proof of Proposition \ref{prop:time_averages}.
\end{remark}

\section{Examples}

\subsection{Birth-death process}

Let us consider the following reaction network:
\begin{equation}
  \phib\rarrow{k_b}\X{},\quad   \X{}\rarrow{\gamma}\phib
\end{equation}
where $\X{}$ is, in this case, both the actuated and the measured species. Assuming mass-action kinetics, the above network can be described by the following ordinary linear differential equation:
  \begin{equation}
      \dot{x}(t)=u(t)-\gamma x(t)
  \end{equation}
  where $x$ is the concentration of the molecular species $\X{}$. The control input is chosen to be the birth rate $k_b$. The transfer function of this system is given by $1/(s+\gamma)$ and hence the closed-loop network will be stable for all $\mu,\alpha>0$. Consequently, the closed-loop network will exhibit output tracking and perfect adaptation. Moreover, the metabolic cost is given, in this case, by
  \begin{equation}
  P^*=\dfrac{\mu}{\gamma}\left(\dfrac{\alpha\mu(\kappa_r+\kappa_m)}{k}+\kappa_a\right).
  \end{equation}

%
%

\subsection{Gene expression with protein maturation}\label{sec:SM:gene_expression}

\textbf{Theoretical results.} Let us consider the following reaction network:
\begin{equation}
  \phib\rarrow{k_m}\M{},\quad
  \M{}\rarrow{\gamma_m}\phib,\quad
  \M{}\rarrow{k_p}\M{} + \P{},\quad
  \P{}\rarrow{\gamma_p}\phib,\quad
  \P{}\rarrow{k_p}\Q{},\quad
  \Q{}\rarrow{\gamma_q}\phib
\end{equation}
where $\M{}$, $\P{}$ and $\Q{}$ represent the mRNA, protein and mature protein species, respectively. Assuming mass-action kinetics, the above network can be described by the following set of ordinary linear differential equations:
  \begin{equation}\label{eq:example_th}
    \begin{array}{lcl}
      \dot{m}(t)&=&k_m-\gamma_mm(t)\\
      \dot{p}(t)&=&k_pm(t)-(\gamma_p+k_q)p(t)\\
      \dot{q}(t)&=&k_qp(t)-\gamma_qq(t)
    \end{array}
  \end{equation}
  where $m,p,q$ are the concentrations of mRNA molecules, protein molecules and mature protein molecules, respectively. We choose the transcription factor $k_m$ to be the control input and the measured output is the concentration of mature protein molecules. Then the model is of the form \eqref{eq:mainsystL} with the matrices:
\begin{equation}
  A = \begin{bmatrix}
    -\gamma_m    &0&    0\\
    k_p  & -(\gamma_p+k_q) & 0\\
    0      &   k_q  & -\gamma_q
  \end{bmatrix},\ B=\begin{bmatrix}
    1\\0\\0
  \end{bmatrix},\ C=\begin{bmatrix}
    0\\0\\1
  \end{bmatrix}^T.
\end{equation}
The corresponding transfer function is given by
\begin{equation}
  H_n(s)=\dfrac{\gamma_m\gamma_q(\gamma_p+k_q)}{s^3+(\gamma_m + \gamma_p + \gamma_q + k_q)s^2+(\gamma_m\gamma_q + (\gamma_m + \gamma_q)(\gamma_p + \gamma_q))s+\gamma_m\gamma_q(\gamma_p+k_q)}.
\end{equation}
We can immediately see that there will be an $\omega>0$ such that $\arg(H_n(j\omega))=-\pi/2$ since the degree of the numerator is 0 and the degree of the denominator is 3. Hence, $\bar{\alpha}$ will be finite. To find it, we need to build the polynomial $Q(\omega)$ as described in \eqref{eq:polycrtic} and we get
%
%
  \begin{equation}
    Q(\omega)=\gamma_m\gamma_q(\gamma_p+k_q)-\omega^2(\gamma_m + \gamma_p + \gamma_q + k_q)
  \end{equation}
  from which we can conclude that $Q(\omega_*)=0$ with $\omega_*=\sqrt{\dfrac{\gamma_m\gamma_q(\gamma_p+k_q)}{\gamma_m + \gamma_p + \gamma_q + k_q}}$. Since $N_I(\omega)=0$ identically, we find that
  \begin{equation}
    \bar{\alpha}=\dfrac{(\gamma_m + \gamma_q)(\gamma_m + \gamma_p + k_q)(\gamma_p + \gamma_q + k_q)}{\mu(\gamma_m + \gamma_p + \gamma_q + \gamma_q)^2}.
  \end{equation}
  As a validation of the result, we can evaluate $H_n(j\omega_*)$ and we get
  \begin{equation}
    H_n(j\omega_*)=-j\dfrac{(\gamma_m \gamma_q (\gamma_p + k_q))^{1/2}(\gamma_m + \gamma_p + \gamma_q + k_q)^{3/2}}{(\gamma_m + \gamma_q) (\gamma_m \gamma_p + 2 \gamma_m \gamma_q + 2 \gamma_p \gamma_q + \gamma_p k_q + \gamma_q k_q + \gamma_p^2 + \gamma_q^2)}
  \end{equation}
  from which we can see that we have $\arg (H_n(j\omega_*))=-\pi/2$.

\subsection{A bimolecular dimerization process}\label{sec:SM:dimerization}

Even if the theory has been developed for unimolecular reaction networks only, the proposed integral controller still works on more general reaction networks such as bimolecular reaction networks. Let us consider for instance the following reversible dimerization process also considered in \cite{Briat:13h}:
\begin{equation}
    \X{1}\rarrow{\gamma_1}\phib,\  \X{1} + \X{1}\rarrow{k_{12}}\X{2},\  \X{2}\rarrow{k_{21}}\X{1} + \X{1},\
    \X{2}\rarrow{\gamma_2}\phib
\end{equation}
where $\X{1}$ is the actuation species, i.e. $\V{}\rarrow{k}\V{}+\X{1}$, and $\X{2}$ is the measured species, i.e. $\Y{}\equiv\X{2}$. The model of the closed-loop network is therefore given by
\begin{equation}
  \begin{array}{rcl}
    \dot{x}_1(t)&=&-\gamma_1x_1(t)-2k_{12}x_1(t)^2+2k_{21}x_2(t)+kv(t)\\
    \dot{x}_2(t)&=&-\gamma_2x_2(t)+k_{12}x_1(t)^2-k_{21}x_2(t)\\
    \dot{v}(t)&=&\alpha v(t)(\mu-x_2(t))
  \end{array}
\end{equation}
where $x_1,x_2$ and $v$ are the molecular concentrations associated with the molecular species $\X{1}$, $\X{2}$ and $\V{}$, respectively. As for unimolecular networks, two equilibrium coexist: the zero equilibrium point $(0,0,0)$ and the positive equilibrium point
\begin{equation}
  (x_1^*,x_2^*,v^*)=\left(\sqrt{\dfrac{\mu(\gamma_2+k_{21})}{k_{12}}},\mu,\dfrac{1}{k}\left(\gamma_1\sqrt{\dfrac{\mu(\gamma_2+k_{21})}{k_{12}}}+\gamma_2\mu\right)\right).
\end{equation}

 The zero equilibrium point is unstable since the Jacobian matrix
\begin{equation}
  \begin{bmatrix}
    -\gamma_1 & 2k_{21} & k\\
    0 & -\gamma_2 & 0\\
    0 & 0 & \alpha\mu
  \end{bmatrix}
\end{equation}
has a positive eigenvalue. On the other hand, the positive equilibrium point is locally asymptotically stable if the matrix
\begin{equation}
  \begin{bmatrix}
    -\gamma_1-4k_{12}x_1^* & 2k_{21} & k\\
    2k_{12}x_1^* & -(\gamma_2+k_{21}) & 0\\
    0 & -\alpha v^* & 0
  \end{bmatrix}
\end{equation}
is Hurwitz stable. By noting that the $2\times 2$ left-upper matrix is Metzler and Hurwitz stable (negative trace and positive determinant), then the results developed in the unimolecular case apply and we can conclude that there exists an $\bar{\alpha}(\mu)>0$ such that for all $\alpha\in(0,\bar{\alpha}(\mu))$ the matrix is Hurwitz stable. The closed-loop network will also exhibit tracking and perfect adaptation. Finally, the metabolic cost can be seen to be equal to
\begin{equation}
  P^*=u^*\left(\dfrac{\alpha\mu(\kappa_r+\kappa_m)}{k}+\kappa_a\right)>\alpha \kappa_au^*
\end{equation}
and
\begin{equation}
  u^*=\gamma_1\sqrt{\dfrac{\mu(\gamma_2+k_{21})}{k_{12}}}+\gamma_2\mu.
\end{equation}
Even though the formula is different from the unimolecular case, exactly the same conclusions can be drawn:
\begin{itemize}
  \item the metabolic cost is an increasing function of the set-point $\mu$.
  \item the metabolic cost can be reduced by increasing the gain $k>0$.
  \item the metabolic cost cannot be arbitrarily reduced due to the presence of a lower bound.
\end{itemize}

\begin{figure}
    \centering
 \includegraphics[width=7in]{./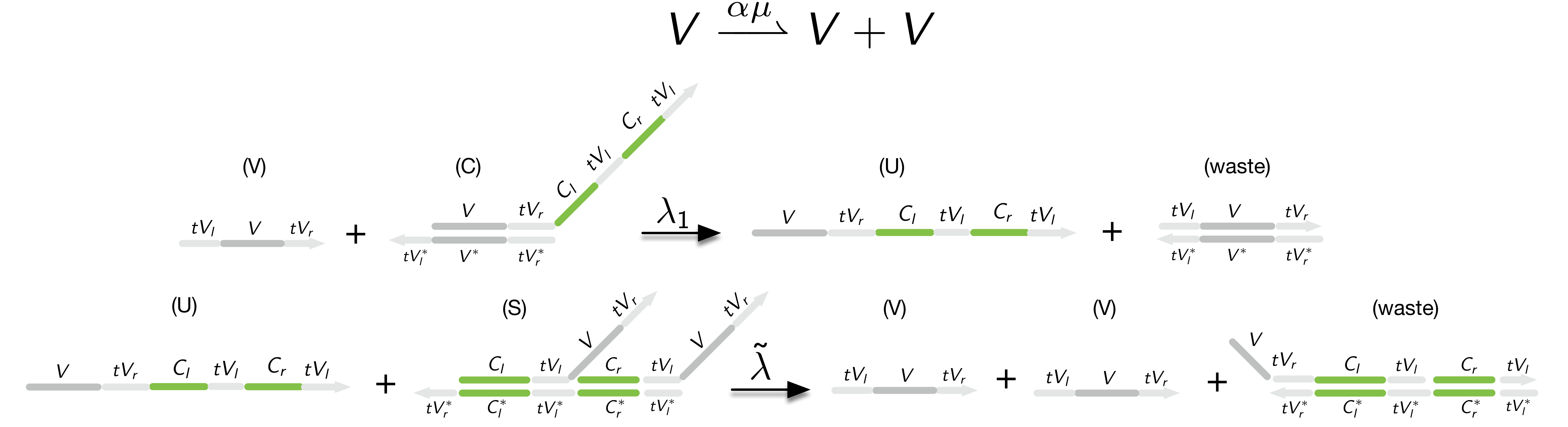}
    \caption{Detailed chart of the DNA implementation of the autocatalytic reference reaction. }\label{fig:SI_DNA}
\end{figure}




\end{document}